\theoremstyle{definition}
\newtheorem{definition}{Definition}[section]
\newtheorem{theorem}[definition]{Theorem}
\newtheorem{lemma}[definition]{Lemma}
\newtheorem{proposition}[definition]{Proposition}
\newtheorem{remark}[definition]{Remark}
\begin{document}

\title[SID of curve sections of the known EF 3-folds]{Simple isotropic decompositions of the curve sections of the known Enriques-Fano threefolds}
\author[V. Martello]{Vincenzo Martello}
\address{Dipartimento di Matematica, Universit\`{a} della Calabria, Arcavacata di Rende (CS)}
\email{vincenzomartello93@gmail.com}

\thanks{}
\subjclass{}
\keywords{}
\date{}
\dedicatory{}
\commby{}

\begin{abstract}
In this paper, we describe the simple isotropic decompositions of the curve sections of the known Enriques-Fano threefolds. The simple isotropic decompositions allow us to identify the irreducible components of the moduli space of the polarized Enriques surfaces. Thus, our analysis will enable us to show to which families of polarized Enriques surfaces the hyperplane sections of the Enriques-Fano threefolds belong.
\end{abstract}

\maketitle
\section{Introduction}

Let $\mathcal{E}$ be the smooth 
$10$-dimensional moduli space parametrizing the Enriques surfaces.
A \textit{polarized Enriques surface} is a pair made of an Enriques surface together with the linear equivalence class of an ample divisor on it. 
For fixed integers $g>1$ and $\phi>0$, let $\mathcal{E}_{g,\phi}$ be the moduli space of the 
polarized Enriques surfaces $(S,H)$
satisfying $H^2=2g-2$ and $\phi(H) =\phi$, where
$$\phi (H) :=\min\{E\cdot H | E\in \operatorname{NS}(S), E^2=0, E>0\}.$$
Although the space $\mathcal{E}$ is irreducible, the space $\mathcal{E}_{g,\phi}$ is in general reducible. 
The irreducible components of $\mathcal{E}_{g,\phi}$ can be described by decompositions of $H$ into effective sums of divisors having zero self-intersection, called \textit{SID} of $H$, which we will define in \S~\ref{subsec:preliminarSid}.
If $g \leq 20$ or $\phi \leq 4$, this is studied in \cite[Corollaries 1.3-1.4]{CDGK19}. The general case (for any $g$ and $\phi$) is studied in \cite{Knu20}, where the author introduced the notion of \textit{fundamental presentation}, which is a particular SID whose coefficients, called \textit{fundamental coefficients}, uniquely determine the irreducible component of the moduli space $\mathcal{E}_{g,\phi}$ (see \cite[Theorem 5.9]{Knu20}); furthermore, alternatively to the fundamental coefficients, the irreducible components of the moduli space of polarized Enriques surfaces are determined by a $\phi$\textit{-vector}, which generalizes the $\phi$-value (see \cite[Theorem 1.3-1.4]{Knu20}). 

Let $(W, \mathcal{L})$ be an \textit{Enriques-Fano threefold}, that is a pair made of a normal threefold $W$ together with a complete linear system $\mathcal{L}$ of ample Cartier divisors whose general element $S\in \mathcal{L}$ is an Enriques surface, and such that $W$ is not a 
\textit{generalized cone} over $S$, i.e., $W$ is not obtained by contraction of the negative section on the $\mathbb{P}^1$-bundle $\mathbb{P}(\mathcal{O}_{S}\oplus \mathcal{O}_{S}(S))$ over $S$. The linear system $\mathcal{L}$ defines a rational map $\phi_{\mathcal{L}} : W \dashrightarrow \mathbb{P}^{p}$, where $p:=\frac{S^3}{2}+1=\dim \mathcal{L}$ is called the \textit{genus} of $W$. It is known that $2\le p\le 17$ and that the bound is sharp (see \cite{KLM11} and \cite{Pro07}). 
Though improperly, we will refer to the elements of $\mathcal{L}$ as \textit{hyperplane sections} of $W$ and to the curve intersections of two elements of $\mathcal{L}$ as \textit{curve sections} of $W$.  
As a matter of fact, some authors define an Enriques-Fano threefold just as a non-degenerate threefold $W'\subset \mathbb{P}^{N}$ whose general hyperplane section $S'$ is an Enriques surface, and such that $W'$ is not a cone over $S'$ (see for example \cite[Definition 1.3]{KLM11}). 
The latter case, indeed, just requires us to take its \textit{normalization} $\nu : W \to W'$ to obtain an Enriques-Fano threefold in the general sense, that is $(W, \mathcal{L}:=|\mathcal{O}_{W}(\nu^* S')|)$. 
Nevertheless, the classification of Enriques-Fano threefolds still remains an open question:
in \S~\ref{sec:list} we will list the known examples and their properties. 

Let us denote by $H$ the class of a curve section on a smooth hyperplane section $S\in \mathcal{L}$ of a known Enriques-Fano threefold $(W,\mathcal{L})$ of genus $p$.
The main result of this paper is Theorem~\ref{thm:SID}, which describes the SID of $H$ and the value $\phi (H)$: thus, our analysis allows to determine to which irreducible components of $\mathcal{E}_{p,\phi}$ the hyperplane sections of the Enriques-Fano threefolds belong (see Theorem~\ref{thm:SID}). 

The idea of the proof of Theorem~\ref{thm:SID} is the following: by taking the inequality $\phi^2 \le H^2 = 2p-2$ (see \cite[Cor. 2.7.1]{CoDo89}) and by studying the nature of the map $\phi_{\mathcal{L}}: W \dashrightarrow \mathbb{P}^{p}$ (thanks to \cite[Lemma 4.1]{CDGK20} and \cite{CoDo89}[Thm 4.4.1, Prop. 4.5.1, Thm 4.6.1]), one finds all the possible values for $\phi$ and all the possible SID of $H$ (see \cite[Appendix]{CDGK19}); subsequently, by studying specific properties of $H$ (such as its divisibility in $\operatorname{Pic}(S)$) one deduces the only possible SID.

In order to find useful information to prove Theorem~\ref{thm:SID}, in \S~\ref{subsec:Fano13} we will study the curve sections of the Enriques-Fano threefolds of genus $13$ found by Fano in \cite[\S 8]{Fa38}; in \S~\ref{subsec:Fano9} we will study the curve sections of the Enriques-Fano threefolds of genus $9$ found by Fano in \cite[\S 7]{Fa38}; in \S~\ref{subsec:pro13} we will deepen the study of the Enriques-Fano threefold of genus $13$ mentioned very briefly by Prokhorov in \cite[Remark 3.3]{Pro07}: furthermore, we will analyze its curve sections. We will work over the field $\mathbb{C}$ of the complex numbers.

\subsection*{Acknowledgment}
The results of this paper are contained in my PhD-thesis. I would like to thank my main advisors C. Ciliberto and C. Galati and my co-advisor A.L. Knutsen for our stimulating conversations and for providing me very useful suggestions. 
I would also like to acknowledge PhD-funding from the Department of Mathematics and Computer Science of the University of Calabria. 

\section{Preliminary results}\label{subsec:preliminarSid}

In this section we want to collect known results about Enriques surfaces.
Let $S$ be an Enriques surface. Any irreducible curve $C$ on $S$ satisfies $C^2=2p_a(C)-2\ge -2$, with equality occurring if and only if $C\cong \mathbb{P}^1$. If $S$ contains such a curve, it is called \textit{nodal}, otherwise it is said to be \textit{unnodal}. The general Enriques surface is unnodal (see \cite{Dolg16}). A divisor $E$ on $S$ is said to be \textit{isotropic} if $E^2 = 0$ and $E$ is not numerically equivalent to $0$, and it is said to be \textit{primitive} if it is non-divisible in $\operatorname{Num}(S)$.
On an unnodal Enriques surface, any effective primitive isotropic divisor $E$ is represented by an irreducible curve of arithmetic genus one.
Let $(S,H)$ be a polarized Enriques surface.
It is known that there are $10$ primitive effective isotropic divisors $E_1, \dots ,E_{10}$ such that $E_i \cdot E_j = 1$ for $i \ne j$ and such that
\begin{equation}
\label{eq:SID}
H \sim a_0E_{1,2}+a_1E_1+\dots +a_{10}E_{10} + \epsilon K_S
\end{equation}
where $E_{1,2}\sim \frac{1}{3}(E_1+\dots +E_{10})-E_1-E_2$ and $a_0,a_1,\dots a_{10}$ are nonnegative integers with
$$\begin{cases}
\text{either }a_0 = 0 \text{ and } \#\{i | i \in \{1, \dots , 10\}, a_i >0 \} \ne 9,\\
\text{or }a_{10} = 0,
\end{cases}$$
and
$$ \epsilon = \begin{cases}
0,\text{ if }H+K_S\text{ is not } 2\text{-divisible on }\operatorname{Pic}(S),\\
1,\text{ if }H+K_S\text{ is } 2\text{-divisible on }\operatorname{Pic}(S),
\end{cases}
$$
(see \cite[Corollary 4.7]{CDGK19}). Let us observe that $E_{1,2}\cdot E_1 = E_{1,2}\cdot E_2 = 2$ and $E_{1,2}\cdot E_i=1$ for $i=3,\dots , 10$.
The expression (\ref{eq:SID}) is called \textit{simple isotropic decomposition} (simply, \textit{SID}) of $H$.

Let us take 
an Enriques-Fano threefold $(W, \mathcal{L})$ of genus $p$ and let us denote by $H$ the class of a curve section of $W$ on a general (smooth) hyperplane section $S$. Hence we have $|H| = \mathcal{L} |_{S}$ (see \cite[Lemma 4.1 (i)]{CDGK20}). We set $\phi := \phi (H)$ and we recall that $\phi^2 \le H^2 = 2p-2$ (see \cite[Cor. 2.7.1]{CoDo89}). 
We say that the rational map associated with $|H|$ is \textit{hyperelliptic} if $p=2$ or if it is of degree $2$ onto a surface of degree $p-2$ in $\mathbb{P}^{p-1}$; we say that it is \textit{superelliptic} if $p=2$ or if it is of degree $2$ onto a surface of degree $p-1$ in $\mathbb{P}^{p-1}$ (see \cite[p. 229]{CoDo89}). 
We have the following results, which we will use later:
\begin{itemize}
\item[(a)] by \cite[Proposition 4.5.1]{CoDo89}
$$\phi = 1 \Leftrightarrow |H| \text{ has 2 simple base points } \Leftrightarrow \phi_{\mathcal{L}} \text{ is hyperelliptic on }S;$$
\item[(b)] by \cite[Lemma 4.1]{CDGK20} and \cite[Theorem 4.4.1]{CoDo89}
$$\phi \ge 2 \Leftrightarrow |H| \text{ base point free } \Leftrightarrow \mathcal{L} \text{ base point free};$$
\item[(c)] by \cite[Lemma 4.1 (i)]{CDGK20} and \cite[Theorems 4.4.1, 4.6.1]{CoDo89} (since $H$ ample)
$$\phi \ge 3 \Leftrightarrow \phi_{\mathcal{L}} \text{ is an isomorphism on }S.$$
\end{itemize}
In the last case (c) we get that $\phi_{\mathcal{L}}(W)\subset \mathbb{P}^p$ is a 
threefold whose general hyperplane section is a smooth Enriques surface.

For later reference, Table~\ref{tab:Ep-phi} will provide with all the irreducible components of $\mathcal{E}_{p,\phi}$ for $2\le p \le 10$ and $p=13,17$, as well as the corresponding simple isotropic decompositions (see \cite[Appendix]{CDGK19}).

\begin{table}[!ht]\small
\begin{tabular}{|c|c|c|c|c|c|c|c|c|} 
\cline{1-4}\cline{6-9}
$p$ &  $\phi$ & comp. & SID &  & $p$ & $\phi$ & comp. & SID\\
\cline{1-4}\cline{6-9}
$2$ &  $1$ & $\mathcal{E}_{2,1}$ & $E_1+E_2$ & & $10$ & $1$ & $\mathcal{E}_{10,1}$ & $9E_1+E_2$\\ 
$3$ &  $1$ & $\mathcal{E}_{3,1}$ & $2E_1+E_2$ & & $10$ & $2$ & $\mathcal{E}_{10,2}$ & $4E_1+E_2+E_3$\\
$3$ &  $2$ & $\mathcal{E}_{3,2}$ & $E_1+E_{1,2}$ & & $10$ & $3$ & $\mathcal{E}_{10,3}^{(I)}$ & $2E_1+E_2+E_3+E_4$\\
$4$ &  $1$ & $\mathcal{E}_{4,1}$ & $3E_1+E_2$ & & $10$ & $3$ & $\mathcal{E}_{10,3}^{(II)}$ & $3(E_1+E_2)$\\
$4$ &  $2$ & $\mathcal{E}_{4,2}$ & $E_1+E_2+E_3$ & & $10$ & $4$ & $\mathcal{E}_{10,4}$ & $2E_{1,2}+E_1+E_2$\\
$5$ &  $1$ & $\mathcal{E}_{5,1}$ & $4E_1+E_2$ & & $13$ & $1$ & $\mathcal{E}_{13,1}$ & $12E_1+E_2$\\
$5$ &  $2$ & $\mathcal{E}_{5,2}^{(I)}$ & $2E_1+E_{1,2}$ & & $13$ & $2$ & $\mathcal{E}_{13,2}^{(I)}$ & $6E_1+E_{1,2}$\\
$5$ &  $2$ & $\mathcal{E}_{5,2}^{(II)+}$ & $2(E_1+E_{2})$ & & $13$ & $2$ & $\mathcal{E}_{13,2}^{(II)+}$ & $2(3E_1+E_2)$\\
$5$ &  $2$ & $\mathcal{E}_{5,2}^{(II)-}$ & $2(E_1+E_{2})+K_S$ & & $13$ & $2$ & $\mathcal{E}_{13,2}^{(II)-}$ & $2(3E_1+E_2)+K_S$\\
$6$ &  $1$ & $\mathcal{E}_{6,1}$ & $5E_1+E_2$ & & $13$ & $3$ & $\mathcal{E}_{13,3}^{(I)}$ & $3E_1+E_2+E_3+E_4$\\
$6$ &  $2$ & $\mathcal{E}_{6,2}$ & $2E_1+E_2+E_{3}$ & & $13$ & $3$ & $\mathcal{E}_{13,3}^{(II)}$ & $4E_1+3E_2$\\
$6$ &  $3$ & $\mathcal{E}_{6,3}$ & $E_1+E_2+E_{1,2}$ & & $13$ & $4$ & $\mathcal{E}_{13,4}^{(I)}$ & $2E_1+2E_2+E_{1,2}$\\
$7$ &  $1$ & $\mathcal{E}_{7,1}$ & $6E_1+E_2$ & & $13$ & $4$ & $\mathcal{E}_{13,4}^{(II)+}$ & $2(E_1+E_2+E_3)$\\
$7$ &  $2$ & $\mathcal{E}_{7,2}^{(I)}$ & $3E_1+E_{1,2}$ & & $13$ & $4$ & $\mathcal{E}_{13,4}^{(II)-}$ & $2(E_1+E_2+E_3)+K_S$\\
$7$ &  $2$ & $\mathcal{E}_{7,2}^{(II)}$ & $3E_1+2E_2$ & & $13$ & $4$ & $\mathcal{E}_{13,4}^{(III)}$ & $3E_1+2E_{1,2}$\\
$7$ &  $3$ & $\mathcal{E}_{7,3}$ & $E_1+E_2+E_3+E_4$ & & $17$ & $1$ & $\mathcal{E}_{17,1}$ & $16E_1+E_2$\\
$8$ &  $1$ & $\mathcal{E}_{8,1}$ & $7E_1+E_2$ & & $17$ & $2$ & $\mathcal{E}_{17,2}^{(I)}$ & $8E_1+E_{1,2}$\\
$8$ &  $2$ & $\mathcal{E}_{8,2}$ & $3E_1+E_2+E_3$ & & $17$ & $2$ & $\mathcal{E}_{17,2}^{(II)+}$ & $2(4E_1+E_2)$\\
$8$ &  $3$ & $\mathcal{E}_{8,3}$ & $2E_1+E_3+E_{1,2}$ & & $17$ & $2$ & $\mathcal{E}_{17,2}^{(II)-}$ & $2(4E_1+E_2)+K_S$\\
$9$ &  $1$ & $\mathcal{E}_{9,1}$ & $8E_1+E_2$ & & $17$ & $3$ & $\mathcal{E}_{17,3}$ & $5E_1+E_3+E_{1,2}$\\
$9$ &  $2$ & $\mathcal{E}_{9,2}^{(I)}$ & $4E_1+E_{1,2}$ & & $17$ & $4$ & $\mathcal{E}_{17,4}^{(I)}$ & $3E_1+2E_2+2E_3$\\
$9$ &  $2$ & $\mathcal{E}_{9,2}^{(II)+}$ & $2(2E_1+E_2)$ & & $17$ & $4$ & $\mathcal{E}_{17,4}^{(II)}$ & $3E_1+2E_2+E_{1,2}$\\
$9$ &  $2$ & $\mathcal{E}_{9,2}^{(II)-}$ & $2(2E_1+E_2)+K_S$ & & $17$ & $4$ & $\mathcal{E}_{17,4}^{(III)+}$ & $2(2E_1+E_{1,2})$\\
$9$ &  $3$ & $\mathcal{E}_{9,3}^{(I)}$ & $2E_1+E_2+E_{1,2}$ & & $17$ & $4$ & $\mathcal{E}_{17,4}^{(III)-}$ & $2(2E_1+E_{1,2})+K_S$\\
$9$ &  $3$ & $\mathcal{E}_{9,3}^{(II)}$ & $2E_1+2E_2+E_3$ & & $17$ & $4$ & $\mathcal{E}_{17,4}^{(IV)+}$ & $4(E_1+E_2)$\\
$9$ &  $4$ & $\mathcal{E}_{9,4}^{+}$ & $2(E_1+E_{1,2})$ & & $17$ & $4$ & $\mathcal{E}_{17,4}^{(IV)-}$ & $4(E_1+E_2)+K_S$\\
$9$ &  $4$ & $\mathcal{E}_{9,4}^{-}$ & $2(E_1+E_{1,2})+K_S$ & & $17$ & $5$ & $\mathcal{E}_{17,5}$ & $2E_1+E_3+E_4+E_5+E_{1,2}$\\
\cline{1-4}\cline{6-9} 
\end{tabular} 
\caption{All irreducible components of $\mathcal{E}_{p,\phi}$ for $2\le p \le 10$ and $p=13,17$.} 
\label{tab:Ep-phi}   
\end{table}

\begin{remark}
A projective variety $X\subset \mathbb{P}^N$ is said to be $k$-\textit{extendable} if there exists a projective variety $V \subset \mathbb{P}^{N+k}$, that is not a cone, such that $X = V \cap \mathbb{P}^N$ (transversely) and $\dim V = \dim X +k$.
It is known that 
if $S\subset \mathbb{P}^{N}$ is an unnodal Enriques surface which is $1$-extendable, then $(S, \mathcal{O}_{S}(1))$ belongs to the following list:
$\mathcal{E}^{(IV)^+}_{17,4}$, $\mathcal{E}^{(II)^+}_{13,4}$, $\mathcal{E}^{(II)}_{13,3}$, $\mathcal{E}^{(II)}_{10,3}$, $\mathcal{E}^{+}_{9,4}$, $\mathcal{E}^{(II)}_{9,3}$, $\mathcal{E}_{7,3}$ (see \cite[Corollary 1.2]{CDGK20}).
\end{remark}

\section{Known Enriques-Fano threefolds}\label{sec:list}

Examples of Enriques-Fano threefolds have been found by several authors: in this section we will list them all while giving some notation.

\begin{definition}\label{def:terminal canonical sing}
Let $W$ be a normal variety such that its canonical divisor $K_{W}$ is $\mathbb{Q}$-Cartier.
Let 
$f : \widetilde{W} \to W$ be a resolution of the singularities of $W$ and let $\{E_{i}\}_{i\in I}$ be the family of all irreducible exceptional divisors. Since 
$K_{\widetilde{W}} = f^{*}\left(K_{W}\right) + \sum_{i\in I} a_{i} E_{i}$
with $a_i \in \mathbb{Q},$
we say that the singularities of $W$ are \textit{terminal} if $a_{i} > 0$ for all $i$, and 
they are \textit{canonical} if $a_{i} \ge 0$ for all $i$.
\end{definition}

It is known that any Enriques-Fano threefold $(W,\mathcal{L})$ is singular with isolated 
singularities (see \cite[Lemma 3.2]{CoMu85}): moreover 
$K_{W}$ is $2$-Cartier and the singularities are canonical (see \cite{Ch96}). Furthermore, if $f : \widetilde{W} \to W$ is a desingularization of $W$, then $h^1(\widetilde{W},\mathcal{O}_{\widetilde{W}})=0$ (see \cite[Lemma 4.1]{CDGK20}). 

\begin{definition}\label{def:Reye}
Let $\mathcal{R}$ be a $3$-dimensional linear system of quadric sufaces of $\mathbb{P}^3$. Let us suppose that $\mathcal{R}$ is sufficiently general, that is: 
\begin{itemize}
\item[(i)]$\mathcal{R}$ is base point free;
\item[(ii)] if $l$ is a double line for $Q \in \mathcal{R}$, then $Q$ is the unique quadric surface in $\mathcal{R}$ containing $l$. 
\end{itemize}
A \textit{Reye congruence} is a surface obtained as the set
$$\{ l \in \mathbb{G}(1,3) | l \text{ is contained in a pencil contained in } \mathcal{R} \},$$ 
where $\mathbb{G}(1,3)$ denotes the Grassmannian variety of lines in $\mathbb{P}^3$. 
\end{definition}


The first to deal with the problem of classification of Enriques-Fano threefolds was Fano (see \cite{Fa38}), who found the following ones, which we will call \textit{F-EF 3-folds}:
\begin{itemize}
\item[(i)] the Enriques-Fano threefold $W_{F}^{6}\subset \mathbb{P}^{6}$ of genus $p=6$ given by the image of $\mathbb{P}^{3}$ via the rational map defined by the linear system $\mathcal{P}$ of the septic surfaces with double points along three twisted cubics having five points in common (see \cite[\S 3]{Fa38}):
such a threefold is \textit{rational}
and its
hyperplane sections 
are Reye congruences (see also \cite[Proposition 3]{Co83});
\item[(ii)] the Enriques-Fano threefold $W_{F}^{7}\subset \mathbb{P}^{7}$ of genus $p=7$ given by the image of $\mathbb{P}^{3}$ via the rational map defined by the linear system $\mathcal{X}$ of the sextic surfaces having double points along the six edges of a tetrahedron and containing a plane cubic curve intersecting each edge at one point (see \cite[\S 4]{Fa38}):
such a threefold is \textit{rational};
\item[(iii)] the Enriques-Fano threefold $W_{F}^{9}\subset \mathbb{P}^{9}$ of genus $p=9$ given by the image of $\mathbb{P}^{3}$ via the rational map defined by the linear system $\mathcal{K}$ of the septic surfaces having double points along the six edges of two trihedra (see \cite[\S 7]{Fa38}):
such a threefold is \textit{rational};
\item[(iv)] the Enriques-Fano threefold $W_{F}^{13}\subset \mathbb{P}^{13}$ of genus $p=13$ given by the image of $\mathbb{P}^{3}$ via the rational map defined by the linear system $\mathcal{S}$ of the sextic surfaces having double points along the six edges of a tetrahedron (see \cite[\S 8]{Fa38}):
such a threefold is \textit{rational};
\end{itemize}

and one ``exceptional'' case (according to Fano's construction): 
\begin{itemize}
\item[(0)] the famous \textit{Enriques threefold} $W^{4}_{F}\subset \mathbb{P}^{4}$, which is a singular sextic hypersurface whose hyperplane section is a sextic surface in $\mathbb{P}^{3}$ with double points along the six edges of a tetrahedron (see \cite[\S 10]{Fa38}):
\begin{itemize}
\item it has equation
\begin{center}
$x_1x_2x_3x_4(x_0^2+x_0\sum_{i=1}^4a_ix_i+\sum_{i,j=1}^{4}b_{ij}x_ix_j)+$

$+c_1x_2^2x_3^2x_4^2+c_2x_1^2x_3^2x_4^2+c_3x_1^2x_2^2x_4^2+c_4x_1^2x_2^2x_3^2 = 0,$
\end{center}
where $x_0,\dots ,x_4$ are the homogeneous coordinates of $\mathbb{P}^4$, and $a_i$, $b_{ij}$ and $c_i$ are sufficiently general complex numbers;
\item it has double points along six planes, which are given by the intersections of four $3$-dimensional projective spaces two by two, and which all pass through the same point;
\item the general Enriques threefolds $W_F^4$ have been proved to be \textit{non-rational} by Picco-Botta and Verra in \cite{P-BV83};
\item it is also contained in \cite{Ba94} and \cite{Sa95} (see (\ref{bayle4fano}) below). 
\end{itemize}
\end{itemize}
Furthermore, as noted by Conte in \cite[p. 225]{Co82}, there is also another ``exceptional'' case in Fano's paper:
\begin{itemize}
\item[(00)] the threefold $W_F^3$ given by a quadruple $\mathbb{P}^3$ (see \cite[\S 2]{Fa38}):
\begin{itemize}
\item it is worth mentioning it, because it is also contained in \cite{Ba94} and \cite{Sa95} (see (\ref{bayle3quadruple}) below).
\end{itemize}
\end{itemize}

The arguments of Fano 
are not free from gaps: indeed, Conte and Murre proved, under certain assumptions, several results that Fano had only stated (see \cite{CoMu85}). However, Conte and Murre did not address the classification problem. Under the assumption that the singularities are terminal cyclic quotients, Enriques-Fano threefolds were classified by Bayle in \cite{Ba94} (and in a similar and independent way by Sano in \cite{Sa95}). 
Let us see how.
Bayle assumed the following fact:
let $(W,\mathcal{L})$ be an Enriques-Fano threefold such that $W$ is the quotient $X/ \sigma$ of a smooth Fano threefold $X$ where $\sigma$ is an involution of $X$ with finitely many fixed points. 
The number of 
these
fixed points 
must be $8$ (see \cite[\S 4.1]{Ba94}). Furthermore, the images of these $8$ points 
via the quotient map $\pi : X \to W$ are eight singular points of $W$ whose tangent cone is a cone over a Veronese surface (see \cite[\S 3]{Ba94}).
By Bayle's assumption we also have 
\begin{itemize}
\item[(i)] $b_{2}(X)+\frac{b_{3}(X)}{2} \equiv 1 \pmod{2}$, where $b_{i}(X):= \operatorname{rank} H_{i}(X,\mathbb{R})$ is the $i^{th}$ Betti's number of $X$ (see \cite[\S 4.2]{Ba94});
\item[(ii)] $\deg X = (-K_{X})^{3} = 4p-4$ is divisible by $4$ (see \cite[\S 4.3]{Ba94}).
\end{itemize}
Bayle's approach to the classification is as follows. He considered all the smooth Fano threefolds, classified by Iskovskih in \cite{Is77} and \cite{Is78} and by Mori and Mukai in \cite{MoMu}, and he eliminated the ones that do not satisfy the afore-mentioned two properties: though a Fano threefold has been erroneously omitted by Mori and Mukai (see \cite{MoMu03}), this has no consequence for Bayle's work, since the degree of this threefold is not divisible by $4$. By studying the remaining smooth Fano threefolds, Bayle found that only $14$ of them have an involution with $8$ fixed points: thus, he found fourteen Enriques-Fano threefolds, by constructing the quotient map $\pi : X \to W$ as the map defined by the sublinear system of $|-K_X|$ given by the $\sigma$-invariant elements. We will refer to them as \textit{BS-EF 3-folds}. They are:

\begin{enumerate}[(I)]
\item\label{bayle2} the Enriques-Fano threefold $W_{BS}^2$ of genus $p=2$ given by the quotient 
of a double cover of a smooth quadric hypersurface of $\mathbb{P}^{4}$ branched in an optic surface (see \cite[\S 6.1.6]{Ba94}): 
\begin{itemize}
\item[--] according to \cite[p. 23]{Ba94}, these $W_{BS}^2$ depend on $25$ moduli;
\item[--] these $W_{BS}^2$ can be also obtained as quotient of the complete intersection of a quadric and quartic in $\mathbb{P}(1^5; 2)$;
\item[--] it is also found by Sano (see \cite[Theorem 1.1 No.1]{Sa95});
\item[--] Cheltsov \textit{conjectured} that $W_{BS}^2$ is \textit{non-rational} (see \cite[Conjecture 19]{Ch04}); 
\end{itemize}

\item\label{bayle3quadruple} the Enriques-Fano threefold $W_{BS}^3$ of genus $p=3$ given by the quotient 
of the complete intersection of three quadric hypersurfaces of $\mathbb{P}^{6}$ (see \cite[\S 6.1.5]{Ba94}): 
\begin{itemize}
\item[--] 
the linear system on $W_{BS}^3$, whose general element is an Enriques surface, defines a morphism and a quadruple cover of $\mathbb{P}^3$;
\item[--] according to \cite[p. 22]{Ba94}, the number of moduli of $W_{BS}^3$ is $15$;
\item[--] it is also found by Sano (see \cite[Theorem 1.1 No.2]{Sa95});
\item[--] Cheltsov \textit{conjectured} that $W_{BS}^3$ is \textit{non-rational} (\cite[Conjecture 19]{Ch04}); 
\end{itemize}

\item\label{bayle3double} the Enriques-Fano threefold $\overline{W}_{BS}^{3}$ of genus $p=3$ given by the quotient 
of the blow-up of $B_2$ along a curve given by the intersection of two elements of $|-\frac{1}{2}K_{B_{2}}|$, where $B_{2}$ is the double cover of $\mathbb{P}^{3}$ branched in a smooth quartic surface (see \cite[\S 6.2.7]{Ba94}): 
\begin{itemize}
\item[--] according to \cite[p. 34]{Ba94}, the number of moduli of $\overline{W}_{BS}^3$ is $15$;
\item[--] these $\overline{W}_{BS}^3$ can also be obtained as quotient of the blow-up of a smooth quartic hypersurface of $\mathbb{P}(1^4; 2)$, along a smooth elliptic curve, which is cut out by two hypersurfaces of degree one;
\item[--] it is also found by Sano (see \cite[Theorem 1.1 No.3]{Sa95});
\end{itemize}

\item\label{bayle4fano} the Enriques-Fano threefold $W_{BS}^4$ of genus $p=4$ given by the quotient 
of a double cover of $\mathbb{P}^{1}\times \mathbb{P}^{1} \times \mathbb{P}^{1}$ branched in a divisor of multidegree $(2,2,2)$ (see \cite[\S 6.3.3]{Ba94}): 
\begin{itemize}
\item[--] 
the linear system on $W_{BS}^4$, whose general element is an Enriques surface, defines a rational map birational onto the image, which is the Enriques threefold $W_F^4\subset \mathbb{P}^4$;
\item[--] according to \cite[p. 40]{Ba94}, the number of moduli of $W_{BS}^4$ is $10$;
\item[--] it is also found by Sano (see \cite[Theorem 1.1 No.5]{Sa95});
\item[--] it is \textit{non-rational} (see \cite{P-BV83});
\end{itemize}

\item\label{bayle4} the Enriques-Fano threefold $\overline{W}_{BS}^{4}$ of genus $p=4$ given by the quotient
of $\mathbb{P}^{1}\times S_{2}$ where $S_{2}$ is a double cover of $\mathbb{P}^{2}$ branched in a quartic curve (see \cite[\S 6.6.2]{Ba94}): 
\begin{itemize}
\item[--] according to \cite[p. 61]{Ba94}, the number of moduli of $\overline{W}_{BS}^4$ is $4$;
\item[--] it is also found by Sano (see \cite[Theorem 1.1 No.4]{Sa95});
\item[--] it is \textit{rational} (see \cite[Remark 7.3]{Ch97});
\end{itemize}

\item\label{bayle5birational} the Enriques-Fano threefold $W_{BS}^{5}$ of genus $p=5$ given by the quotient 
of the blow-up of a smooth intersection of two quadric hypersurfaces of $\mathbb{P}^{5}$, along the elliptic curve given by the intersection of two hyperplane sections (see \cite[\S 6.2.2]{Ba94}): 
\begin{itemize}
\item[--] according to \cite[p. 25]{Ba94}, the number of moduli of these threefolds is $7$;
\item[--] it is also found by Sano (see \cite[Theorem 1.1 No.7]{Sa95});
\item[--] it was accidentally not listed in \cite[Theorem B]{Ba94};
\item[--] it is \textit{rational} (see \cite[Remark 7.3]{Ch97});
\end{itemize}

\item\label{bayle5double} the Enriques-Fano threefold $\overline{W}_{BS}^{5}$ of genus $p=5$ given by the quotient 
of a double cover of $\mathbb{P}^3$, branched in a smooth quartic surface (see \cite[\S 6.1.2]{Ba94}):
\begin{itemize}
\item[--] according to \cite[p. 18]{Ba94}, the number of moduli of these threefolds is $11$;
\item[--] these $\overline{W}_{BS}^5$ can be also obtained as quotient of a quartic hypersurface of $\mathbb{P}(1^4; 2)$;
\item[--] it is also found by Sano (see \cite[Theorem 1.1 No.8]{Sa95});
\item[--] it is \textit{rational} (see \cite[Theorem 1]{Ch04});
\end{itemize}

\item\label{bayle6fano} the Enriques-Fano threefold $W_{BS}^{6}$ of genus $p=6$ given by the quotient 
of the complete intersection of three divisors of bidegree $(1,1)$ on $\mathbb{P}^{3} \times \mathbb{P}^{3}$ (see \cite[\S 6.2.4]{Ba94}):
\begin{itemize}
\item[--] according to \cite[p. 29]{Ba94}, the number of moduli of these threefolds is $24$;
\item[--] it is also found by Sano (see \cite[Theorem 1.1 No.9]{Sa95});
\item[--] it is \textit{rational} (see \cite[Corollary 7.2]{Ch97});
\end{itemize}

\item\label{bayle7} the Enriques-Fano threefold $\overline{W}_{BS}^7$ of genus $p=7$ given by the quotient 
of $\mathbb{P}^{1} \times S_{4}$, where $S_{4}$ is a Del Pezzo surface of degree $4$ in $\mathbb{P}^{4}$ (see \cite[\S 6.6.1]{Ba94}): 
\begin{itemize}
\item[--] according to \cite[p. 59]{Ba94}, the number of moduli of these threefolds is $2$;
\item[--] it is also found by Sano (see \cite[Theorem 1.1 No.10]{Sa95});
\item[--] it is \textit{rational} (see \cite[Corollary 7.2]{Ch97});
\end{itemize}

\item\label{bayle7fano} the Enriques-Fano threefold $W_{BS}^{7}$ of genus $p=7$ given by the quotient 
of a smooth divisor on $\mathbb{P}^{1} \times \mathbb{P}^{1} \times \mathbb{P}^{1} \times \mathbb{P}^{1}$ of multidegree $(1,1,1,1)$ (see \cite[\S 6.4.1]{Ba94}):
\begin{itemize}
\item[--] according to \cite[p. 46]{Ba94}, the number of moduli of these threefolds is $3$;
\item[--] it is also found by Sano (see \cite[Theorem 1.1 No.11]{Sa95});
\item[--] it is \textit{rational} (see \cite[Corollary 7.2]{Ch97});
\end{itemize}

\item\label{bayle8} the Enriques-Fano threefold $W_{BS}^{8}$ of genus $p=8$ given by the quotient 
of the blow-up of the cone over a quadric surface $Q\subset \mathbb{P}^{3}$ along the disjoint union of the vertex and an elliptic curve on $Q$ (see \cite[\S 6.4.2]{Ba94}):
\begin{itemize}
\item[--] according to \cite[p. 51]{Ba94}, the number of moduli of these threefolds is $2$;
\item[--] Sano erroneously omits it (see \cite[p. 378]{Sa95});
\item[--] it is \textit{rational} (see \cite[Corollary 7.2]{Ch97});
\end{itemize} 
 
\item\label{bayle9fano} the Enriques-Fano threefold $W_{BS}^{9}$ of genus $p=9$ given by the quotient 
of the intersection of two quadrics in $\mathbb{P}^{5}$ (see \cite[\S 6.1.4]{Ba94}):
\begin{itemize}
\item[--] according to \cite[p. 21]{Ba94}, the number of moduli of these threefolds is $3$;
\item[--] it is also found by Sano (see \cite[Theorem 1.1 No.12]{Sa95});
\item[--] it is \textit{rational} (see \cite[Corollary 7.2]{Ch97});
\end{itemize}

\item\label{bayle10} the Enriques-Fano threefold $W_{BS}^{10}$ of genus $p=10$ given by the quotient 
of $\mathbb{P}^1 \times S_{6}$, where $S_{6}$ is a smooth Del Pezzo surface of degree $6$ in $\mathbb{P}^{6}$ (see \cite[\S 6.5.1]{Ba94}):
\begin{itemize}
\item[--] this threefold has no moduli (see \cite[p. 56]{Ba94});
\item[--] it is also found by Sano (see \cite[Theorem 1.1 No.13]{Sa95});
\item[--] it is \textit{rational} (see \cite[Corollary 7.2]{Ch97});
\end{itemize}

\item\label{bayle13fano} the Enriques-Fano threefold $W_{BS}^{13}$ of genus $p=13$ given by the quotient 
of $\mathbb{P}^{1} \times \mathbb{P}^{1} \times \mathbb{P}^{1}$ (see \cite[\S 6.3.2]{Ba94}):
\begin{itemize}
\item[--] this threefold has no moduli (see \cite[p. 37]{Ba94});
\item[--] it is also found by Sano (see \cite[Theorem 1.1 No.14]{Sa95});
\item[--] it is \textit{rational} (see \cite[Corollary 7.2]{Ch97});
\end{itemize}
\end{enumerate}

\begin{remark}
Sano found another threefold (see \cite[Theorem 1.1 No.6]{Sa95}) but Bayle excluded it, by providing a more accurate analysis than Sano's (see \cite[\S 6.2.5]{Ba94}).
\end{remark}

\begin{remark}
If $(W,\mathcal{L})$ is one among $W_{BS}^6$, $W_{BS}^7$, $W_{BS}^8$, $W_{BS}^9$, $W_{BS}^{10}$ and $W_{BS}^{13}$, then the 
elements of $\mathcal{L}$ are very ample (see \cite[Theorem A]{Ba94}).
\end{remark}

More generally, if an Enriques-Fano threefold has terminal singularities, then it admits a $\mathbb{Q}$\textit{-smoothing}, i.e., it appears as central fibre of a small deformation over the $1$-parameter unit disk such that a general fibre only has cyclic quotient terminal singularities (see \cite[Main Theorem 2]{Mi99}). Hence every Enriques-Fano threefold with only terminal singularities is a limit of some found by Bayle and Sano. 

\begin{remark}\label{rem:linkBayleFano}
Since the rational F-EF 3-folds have eight quadruple points whose tangent cone is a cone over a Veronese surface (see \cite[p. 44]{Fa38}), then they only have terminal singularities (see \cite[Example 1.3]{Re87}) and therefore they are limits of BS-EF 3-folds (see \cite[Main Theorem 2]{Mi99}). 
\end{remark}

Thus, to complete the classification, one has to consider the case of non-terminal canonical singularities. However, only a few examples of Enriques-Fano threefolds with non-terminal canonical singularities are known: one of genus $p=9$ found by Knutsen, Lopez and Mu\~{n}oz (see \cite[Proposition 13.1]{KLM11}) and two others found by Prokhorov having genus $p=17$ and $p=13$ (see \cite[\S 3]{Pro07}). 
In particular, the one introduced in \cite{KLM11}, which we will call \textit{KLM-EF 3-fold}, is the following:
\begin{enumerate}[(I)]
\setcounter{enumi}{14}
\item\label{KLM} the Enriques-Fano threefold $W_{KLM}^9 \subset \mathbb{P}^9$ of genus $p=9$ given by the image of the F-EF 3-fold $W_F^{13}\subset \mathbb{P}^{13}$ via the rational map $\rho_{\left\langle E_3 \right\rangle} : \mathbb{P}^{13} \dashrightarrow \mathbb{P}^{9}$, which is the projection of $\mathbb{P}^{13}$ from the three-dimensional linear subspace $\mathbb{P}^{3}\cong \left\langle E_3 \right\rangle$ spanned by a certain smooth irreducible elliptic quartic curve $E_3\subset W_F^{13}$;
\end{enumerate}
the ones constructed in \cite{Pro07}, which we will call \textit{P-EF 3-folds}, are the following:
\begin{enumerate}[(I)] 
\setcounter{enumi}{15}
\item\label{pro13} an Enriques-Fano threefold $W_{P}^{13}$ of genus $p=13$ given by the quotient of a cone over a smooth Del Pezzo surface of degree $6$, under an involution fixing five points: it was mentioned very briefly in \cite[Remark 3.3]{Pro07}, so we refer to \S~\ref{subsec:pro13} for more details; this threefold is (at least) \textit{unirational};
\item\label{pro17} an Enriques-Fano threefold $W_{P}^{17}$ of genus $p=17$ given by the quotient of a cone over the octic Del Pezzo surface obtained by the anticanonical embedding of $\mathbb{P}^{1}\times \mathbb{P}^{1}$ in $\mathbb{P}^8$, under an involution fixing five points (see \cite[Proposition 3.2]{Pro07}); this threefold is (at least) \textit{unirational}.
\end{enumerate}

\section{2-divisibility of the curve sections of the F-EF 3-fold of genus 13}\label{subsec:Fano13}

In this section we prove the 2-divisibility of the curve sections of the F-EF 3-fold of genus $13$, which we will use in the proof of Theorem~\ref{thm:SID} to determine their SID. In order to do this, we introduce the geometric construction of the F-EF 3-fold of genus $13$.

Let us take a tetrahedron $T\subset \mathbb{P}^3$. Let $v_0$, $v_1$, $v_2$, $v_3$ be the vertices of $T$ and let $f_i$ be the face opposite to the vertex $v_i$, for $0\le i\le 3$. Furthermore, let us denote the edges of $T$ by $l_{ij}:=f_i\cap f_j$, for $0\le i<j\le 3$. Let us consider the linear system $\mathcal{S}$ of the sextic surfaces of $\mathbb{P}^{3}$ having double points along the six edges of $T$.
Up to a change of coordinates, we may assume to have the tetrahedron $T=\{s_0s_1s_2s_3=0\}$ in $\mathbb{P}^3_{\left[ s_0:s_1:s_2:s_3 \right]}$ with faces $f_i = \{s_i=0\}$, for $0\le i\le 3$. The linear system $\mathcal{S}$ is therefore defined by the zero locus of the following homogeneous polynomial
$$\lambda_0s_1^2s_2^2s_3^2+ \lambda_1s_0^2s_2^2s_3^2+\lambda_2s_0^2s_1^2s_3^2+\lambda_3s_0^2s_1^2s_2^2+ s_0s_1s_2s_3Q(s_0,s_1,s_2,s_3),$$
where $\lambda_0,\lambda_1,\lambda_2,\lambda_3\in \mathbb{C}$ and $Q(s_0,s_1,s_2,s_3)=\sum_{i\le j} q_{ij}s_is_j$ is a quadratic form (see \cite[p.635]{GH}).
Since $\dim H^0(\mathbb{P}^3, \mathcal{O}_{\mathbb{P}^3}(2)) = \binom{3+2}{2}$, then $\dim \mathcal{S} = 13$. 
\begin{remark}\label{rem:variabeleTC13}
Let $\Sigma$ be a general element of $\mathcal{S}$ and let us fix four distinct indices $i,j,k,h\in\{0,1,2,3\}$. By looking locally at the equation of $\mathcal{S}$, one obtains that: 
\begin{itemize}
\item[(i)] $\Sigma$ has triple points at the vertices of $T$ and the tangent cone to $\Sigma$ at $v_i$ is $f_j\cup f_k \cup f_h$;
\item[(ii)] if $p\in l_{ij}$, with $p\ne v_k$ and $p\ne v_h$, then the tangent cone to $\Sigma$ at $p$ is the union of two variable planes containing $l_{ij}$, depending on the choice of the point $p$ and of the surface $\Sigma$, and coinciding for finitely many points $p$.
\end{itemize}
\end{remark}
The rational map $\nu_{\mathcal{S}} : \mathbb{P}^{3} \dashrightarrow \mathbb{P}^{13}$ defined by $\mathcal{S}$ is birational onto the image, which is the F-EF 3-fold $W_F^{13}$ (see \cite[\S 8]{Fa38}).

\begin{theorem}\label{thm:divisibilitySigma}
Let $S$ be a general hyperplane section of the 
F-EF 3-fold $W_F^{13}\subset \mathbb{P}^{13}$ of genus $13$. Then a general curve section of $W_F^{13}\subset \mathbb{P}^{13}$ on $S$ is $2$-divisible in $\operatorname{Pic}(S)$. 
\end{theorem}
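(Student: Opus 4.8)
The plan is to realize $S$ as the minimal desingularization of a general sextic $\Sigma\in\mathcal{S}$ and to compute the class of the curve section $H$ directly on $S$. Write $\Pi$ for the pullback to $S$ of a general plane of $\mathbb{P}^3$ (so that $\Pi^2=6$), write $R_{ij}$ for the curve lying over the edge $l_{ij}$, and for each vertex $v_i$ write $\tau_i$ for the cycle of curves lying over $v_i$. Since $\nu_{\mathcal{S}}$ is defined by the sextics of $\mathcal{S}$ and $|H|=\mathcal{L}|_S$, the class $H$ is the moving part of $\mathcal{S}$ restricted to $\Sigma$; resolving the base locus of $\mathcal{S}$ and using that a general member is doubly singular along the six edges and has a triple point at each vertex (Remark~\ref{rem:variabeleTC13}), I expect a formula of the shape
\[
H \sim 6\Pi - 2\sum_{i<j}R_{ij} - 3\sum_{i}\tau_i .
\]
The terms $6\Pi=2(3\Pi)$ and $2\sum R_{ij}$ are manifestly $2$-divisible, so the whole question reduces to the parity of the vertex cycles $\sum_i\tau_i$.

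First I would fix the resolution $\pi:X\to\mathbb{P}^3$ (blow up the four vertices, then the strict transforms of the six edges) and identify $S$ with the strict transform $\widetilde{\Sigma}$, recording the discrepancies; this pins down the coefficients $2$ and $3$ above through the local equations of \S~\ref{subsec:Fano13} and the tangent-cone description of Remark~\ref{rem:variabeleTC13}. Next I would exploit adjunction on $X$: from $K_S=(K_X+\widetilde{\Sigma})|_S$ together with $K_S$ being torsion (so $K_S\equiv 0$ numerically and $2K_S\sim 0$), one obtains a linear equivalence relating $\Pi$, $\sum R_{ij}$ and $\sum\tau_i$, of the form $2\Pi\sim\sum_i\tau_i+\sum_{i<j}R_{ij}+K_S$. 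Substituting this back into the expression for $H$ collapses the $\Pi$-terms and should reduce the statement to showing that the single explicit class $\sum_{i<j}R_{ij}+K_S$ (equivalently $\sum_i\tau_i$) is $2$-divisible in $\operatorname{Pic}(S)$.

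To finish I would use the distinguished reducible members of $\mathcal{S}$: the four squares $(s_js_ks_l)^2$ of the face-triangle cubics and the products $s_0s_1s_2s_3\,Q$, whose restrictions to $\Sigma$ are forced to be \emph{even} divisors, together with the face sections $f_k\cap\Sigma=2(l_{ka}+l_{kb}+l_{kc})$, which are non-reduced of multiplicity two along the edges. Restricting these even divisors to $S$ and comparing with the formula for $H$ yields the remaining linear relations among the $R_{ij}$, $\tau_i$ and $K_S$, from which the required $2$-divisibility follows, so that $H\sim 2D$ for an explicit $D$ (morally the pullback of a cubic surface through the six edges).

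The main obstacle is the analysis at the four vertices. These are triple points whose tangent cone is a union of three faces and which moreover lie on three of the double edges, so the resolution there is genuinely intricate; controlling the classes $\tau_i$ and their multiplicity-$3$ contribution is exactly what is needed to beat the naive parity obstruction (a general sextic has odd multiplicity $3$ at each vertex, whereas twice a cubic has even multiplicity there). A secondary subtlety is that the statement concerns divisibility in $\operatorname{Pic}(S)$ and not merely in $\operatorname{Num}(S)$: one must rule out a $K_S$-twist, i.e.\ distinguish $2D$ from $2D+K_S$, which is precisely the bookkeeping of the torsion class carried through the adjunction relation above.
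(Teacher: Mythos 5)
Your general framework (resolve the base locus of $\mathcal{S}$ and express the curve-section class on the resolution) is the same as the paper's, but your central formula is wrong, and the gap it creates is exactly at the place you flag as ``the main obstacle''. Your proposed resolution (vertices, then edges) is not enough: after those two rounds of blow-ups the system still has base curves, namely the twelve lines $\gamma_{ij}=E_i\cap\widetilde{f}_j$ cut on each vertex exceptional plane $E_i$ by the strict transforms of the three faces through $v_i$. This is forced by Remark~\ref{rem:variabeleTC13}(i): the tangent cone of a general $\Sigma$ at $v_i$ is $f_j\cup f_k\cup f_h$, so $\Sigma'\cap E_i$ is precisely the union of these three reduced lines. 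The paper blows up these twelve curves as well, and after this third round the strict transform $\widetilde{\Sigma}$ of a general sextic is \emph{disjoint} from the vertex exceptional divisors, i.e.\ $\mathcal{E}_i|_{\widetilde{\Sigma}}=0$. Consequently, in the correct expression
\[
H \sim \Bigl(6\mathcal{H}-3\sum_{i}\mathcal{E}_i-2\sum_{i<j}\mathcal{F}_{ij}-4\sum_{i\ne j}\Gamma_{ij}\Bigr)\Big|_{\widetilde{\Sigma}},
\]
the only odd coefficient, the $3$ coming from the triple points, multiplies a class that restricts to zero, while the curves of $S$ genuinely lying over the vertices (those coming from the $\Gamma_{ij}$, plus vertical components of the $\mathcal{F}_{ij}$) enter with coefficients $4$ and $2$. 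In your notation the true formula is therefore of the shape $H\sim 6\Pi-2\sum R_{ij}-4\sum_i\tau_i$ (with $\tau_i=\sum_{j\ne i}\Gamma_{ij}|_{\widetilde{\Sigma}}$), not $6\Pi-2\sum R_{ij}-3\sum_i\tau_i$: there is no parity obstruction at the vertices to ``beat'', and $2$-divisibility is immediate, on $\widetilde{\Sigma}$ and hence on $S$ by pushing forward.

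Because the coefficient $3$ is not correct --- the vertex curves appear only with even multiplicities, so no effective choice of cycles $\tau_i$ can make your starting formula true --- the machinery you build to repair it (the adjunction relation, the torsion bookkeeping, and the restrictions of the reducible members $(s_js_ks_l)^2$ and $s_0s_1s_2s_3Q$) is aimed at proving a divisibility for classes that simply do not occur in $H$, and in any case it is left as a programme: you never derive the claimed ``remaining linear relations''. To your credit, several ingredients are sound: the adjunction relation $2\Pi\sim\sum_i\tau_i+\sum_{i<j}R_{ij}+K_S$ does hold on the full three-step resolution (it follows from $K_Y=-4\mathcal{H}+2\sum_i\mathcal{E}_i+\sum_{i<j}\mathcal{F}_{ij}+3\sum_{i\ne j}\Gamma_{ij}$ together with $\mathcal{E}_i|_{\widetilde{\Sigma}}=0$), your worry about $\operatorname{Pic}$ versus $\operatorname{Num}$ is legitimate in general (though moot here, since the correct formula is a genuine linear equivalence $H\sim 2D$), and your guess that $D$ is ``morally the pullback of a cubic through the six edges'' is exactly what the paper's formula produces. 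But the missing idea is the geometric one: the tangent-cone structure at the vertices pushes the resolved surface off the vertex exceptional divisors, and without the third round of blow-ups neither your classes $\tau_i$ nor their multiplicities in $H$ are pinned down, so the proof does not close.
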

\begin{proof}
The idea of the proof is to blow-up $\mathbb{P}^3$ along the base locus of $\mathcal{S}$, until one obtains a smooth rational threefold $Y$ and a base point free linear system $\widetilde{\mathcal{S}}$ on $Y$, which defines a birational morphism $\nu_{\widetilde{\mathcal{S}}}: Y \to W_F^{13}\subset \mathbb{P}^{13}$. Let $S$ be a general hyperplane section of $W_F^{13}\subset \mathbb{P}^{13}$ and let $\widetilde{\Sigma}$ be the element of $\widetilde{\mathcal{S}}$ such that $\nu_{\widetilde{\mathcal{S}}}(\widetilde{\Sigma}) = S$. We will show that a general curve section of $W_F^{13}\subset \mathbb{P}^{13}$ on $S$ is $2$-divisible, since it corresponds to a divisor on $\widetilde{\Sigma}$ which is $2$-divisible. 

First we blow-up $\mathbb{P}^3$ at the vertices of $T$, obtaining a smooth threefold $Y'$ and a birational morphism $bl' : Y'\to \mathbb{P}^{3}$ with exceptional divisors
$E_i := (bl')^{-1}(v_i)$, for $0\le i \le 3$. 
Let $\mathcal{S}'$ be the strict transform of $\mathcal{S}$ and let us denote by $\mathfrak{h}$ the pullback on $Y'$ of the hyperplane class on $\mathbb{P}^{3}$. Then an 
element of $\mathcal{S}'$ is linearly equivalent to $6\mathfrak{h}-3\sum_{i=0}^{3}E_i$.
Let $\widetilde{f}_i$ be the strict transform of the face $f_i$, for $0\le i \le 3$. We denote by $\gamma_{ij}:=E_i\cap \widetilde{f}_j$ the line cut out by $\widetilde{f}_j$ on $E_i$, for $0\le i<j \le 3$. 
If $\Sigma'$ is the strict transform of a general $\Sigma\in\mathcal{S}$, then $\Sigma'\cap E_i = \bigcup_{\substack{j=0 \\ j\ne i}}^3 \gamma_{ij}$, for all $0\le i \le 3$, and $\Sigma'$ is smooth at a general point of $\gamma_{ij}$
(see Remark~\ref{rem:variabeleTC13}). 
The base locus of $\mathcal{S}'$ is therefore given by the union of the strict transforms $\widetilde{l}_{ij}$ of the six edges of $T$ (along which a general $\Sigma'\in \mathcal{S}'$ has double points) and the $12$ lines $\gamma_{ij}$
(see Remark~\ref{rem:variabeleTC13}).
Let us now blow-up $Y'$ along the strict transforms of the edges of $T$: we obtain a smooth threefold $Y''$ and a birational morphism $bl'' : Y'' \to Y'$ with exceptional divisors 
$F_{ij}:=(bl'')^{-1}(\widetilde{l}_{ij}),$ for $0\le i<j \le 3$.
Let $\mathcal{S}''$ be the strict transform of $\mathcal{S}'$: an 
element of $\mathcal{S}''$ is linearly equivalent to $6\mathfrak{h}-3\sum_{i=0}^3 \widetilde{E}_i-2\sum_{0\le i < j \le 3} F_{ij}$, where $\widetilde{E}_i$ is the strict transform of $E_i$, for $0\le i \le 3$, and where $\mathfrak{h}$ denotes the pullback $bl''^* \mathfrak{h}$, by abuse of notation. 
Furthermore, the base locus of $\mathcal{S}''$ is given by the disjoint union of the strict transforms $\widetilde{\gamma}_{ij}$ of the 12 lines $\gamma_{ij}$, for $i,j\in\{0,1,2,3\}$ and $i\ne j$
(see Remark~\ref{rem:variabeleTC13}).
Finally let us consider $bl''' : Y \to Y''$ the blow-up of $Y''$ along the twelve curves $\widetilde{\gamma}_{ij}$, for $i,j\in\{0,1,2,3\}$ and $i\ne j$, with exceptional divisors $\Gamma_{ij}:=bl'''^{-1}(\widetilde{\gamma}_{ij})$. 
We denote by $\mathcal{E}_i$ the strict transform of $\widetilde{E}_i$, by $\mathcal{F}_{ij}$ the strict transform of $F_{ij}$ and by $\mathcal{H}$ the pullback of $\mathfrak{h}$, for $0\le i<j \le 3$. 
Let $\widetilde{\Sigma}$ be the strict transform on $Y$ of an 
element of $\mathcal{S}''$: then
$$\widetilde{\Sigma}\sim 6\mathcal{H}-\sum_{i=0}^3 3\mathcal{E}_k-\sum_{0\le i< j\le 3}2\mathcal{F}_{ij}-\sum_{ \substack{ i,j=0 \\ i\ne j } }^3 4\Gamma_{ij}.$$
Let us take the linear system $\widetilde{\mathcal{S}}:=|\mathcal{O}_{Y}(\widetilde{\Sigma})|$ on $Y$. It is base point free and it defines a morphism $\nu_{\widetilde{\mathcal{S}}}: Y \to \mathbb{P}^{13}$ birational onto the image, which is the F-EF 3-fold $W_F^{13}$. In particular we have the following diagram:
$$\begin{tikzcd}
Y \arrow[d, "bl'''"] \arrow[drrr, "\nu_{\widetilde{\mathcal{S}}}"] & & & \\
Y''  \arrow{r}{bl''} & Y' \arrow{r}{bl'} & \mathbb{P}^3 \arrow[dashrightarrow]{r}{\nu_{\mathcal{S}}} & W_F^{13} \subset \mathbb{P}^{13}.
\end{tikzcd}$$
Let $\widetilde{\Sigma}$ be the element of $\widetilde{\mathcal{S}}$ such that $\nu_{\widetilde{\mathcal{S}}}(\widetilde{\Sigma}) = S$, where $S$ is a general hyperplane section of $W_F^{13}\subset \mathbb{P}^{13}$. By construction we have $\widetilde{\Sigma}\cdot \mathcal{E}_i = 0$ for all $0\le i \le 3$. A general curve section of $W_F^{13}$ on $S$ corresponds to the following divisor on $\widetilde{\Sigma}$: $(6\mathcal{H}-\sum_{0\le i< j\le 3}2\mathcal{F}_{ij}-\sum_{ \substack{ i,j=0 \\ i\ne j } }^2 4\Gamma_{ij})|_{\widetilde{\Sigma}}$, which is $2$-divisible. Thus, we obtain the assertion. 
\end{proof}

\section{2-divisibility of the curve sections of the F-EF 3-fold of genus 9}\label{subsec:Fano9}

In this section we prove the 2-divisibility of the curve sections of the F-EF 3-fold of genus $9$, which we will use in the proof of Theorem~\ref{thm:SID} to determine their SID. In order to do this, we introduce the geometric construction of the F-EF 3-fold of genus $9$.

Let us take the trihedron $T\subset \mathbb{P}^3$ with vertex $v$, faces $f_i$ and edges $l_{ij} := f_i \cap f_j$, and the trihedron $T'\subset \mathbb{P}^3$ with vertex $v'$, faces $f_i'$ and edges $l_{ij}' := f_i' \cap f_j'$, for $1\le i <j \le 3$. 
Let us consider the linear system $\mathcal{K}$ of the septic surfaces of $\mathbb{P}^{3}$ having double points along the six edges of the two trihedra $T$ and $T'$.

\begin{remark}\label{rem:rij contained in K}
A septic surface $K\in \mathcal{K}$ contains the nine lines $r_{ij}:=f_i\cap f_j'$, for $i,j\in\{1,2,3\}$. Assume the contrary: then, by Bezout's Theorem, $K\cap r_{ij}$ is given by $7$ points. Furthermore, each line $r_{ij}$ intersects two edges of $T$ contained in $f_i$ and two edges of $T'$ contained in $f_j'$. Hence $r_{ij}$ is a line through four double points of $K$.
We obtain that $K\cap r_{ij}$ contains at least $8$ points, counted with multiplicity, which is a contradiction. Thus, it must be $r_{ij} \subset K$. 
\end{remark}

\begin{proposition}\label{prop:dimK=9}
The linear system $\mathcal{K}$ is defined by the zero locus of the following homogeneous polynomial of degree seven
$$F(s_0,s_1,s_2,s_3) = f_1f_2f_3f_1'f_2'f_3'(\lambda_0s_0+\lambda_1s_1+\lambda_2s_2+\lambda_3s_3)+$$
$$+f_1'f_2'f_3'(\lambda_4 f_3^2f_2^2+\lambda_5 f_1^2f_3^2+\lambda_6 f_1^2f_2^2)+f_1f_2f_3(\lambda_7 {f_3'}^2{f_2'}^2+\lambda_8 {f_1'}^2{f_3'}^2+\lambda_9 {f_1'}^2{f_2'}^2),$$
where $\lambda_0,\dots ,\lambda_9\in \mathbb{C}$, and where $f_i$ and $f_i'$ denote, by abuse of notation, the linear homogeneous polynomials defining, respectively, the faces $f_i$ and $f_i'$, for $1\le i \le 3$. The linear system $\mathcal{K}$ therefore has $\dim \mathcal{K} = 9$.
\end{proposition}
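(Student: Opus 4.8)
<br>

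The goal is to show that the linear system $\mathcal{K}$ of septic surfaces with double points along the six edges of the two trihedra $T$ and $T'$ is exactly the $10$-parameter family cut out by $F$, so that $\dim \mathcal{K} = 9$. My plan is to proceed in two directions: first verify that every surface of the displayed form $\{F=0\}$ actually lies in $\mathcal{K}$, and second show that conversely every element of $\mathcal{K}$ has this form, which pins down the dimension.

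\textbf{Membership of the displayed family.} First I would check that each of the three ``blocks'' of $F$ defines a septic with double points along all six edges. Writing $f_i$, $f_i'$ for the linear forms, the edges of $T$ are $l_{ij}=\{f_i=f_j=0\}$ and those of $T'$ are $l_{ij}'=\{f_i'=f_j'=0\}$. The term $f_1f_2f_3f_1'f_2'f_3'(\lambda_0 s_0+\dots)$ is divisible by $f_if_j$ on each edge $l_{ij}$ and by $f_i'f_j'$ on each $l_{ij}'$, hence vanishes to order $2$ along every edge. For the block $f_1'f_2'f_3'(\lambda_4 f_3^2f_2^2+\dots)$: along an edge $l_{ij}'$ of $T'$ it is divisible by $f_i'f_j'$, giving multiplicity two; along an edge $l_{ij}$ of $T$ one checks that each monomial $f_a^2f_b^2$ is divisible by $f_i^2$ or $f_j^2$ (since among the three squared faces at least the two not equal to the remaining index survive), so the whole block vanishes to order $2$ there as well. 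The third block is symmetric under swapping $T\leftrightarrow T'$. Thus $\{F=0\}\subset \mathcal{K}$ for all $(\lambda_0,\dots,\lambda_9)$.

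\textbf{Reverse inclusion and the dimension count.} The substantive direction is to show $\mathcal{K}$ is no larger. Here I would exploit Remark~\ref{rem:rij contained in K}: every $K\in\mathcal{K}$ contains the nine lines $r_{ij}=f_i\cap f_j'$. The strategy is to restrict the defining septic $G$ of an arbitrary $K\in\mathcal{K}$ to the faces and use the double-point conditions to force divisibility. Restricting $G$ to a face $f_i=0$, the induced sextic on that plane must have double points along the edges of $T'$ lying in it and must contain the relevant lines $r_{ij}$; iterating this across the faces of both trihedra should force $G$ to be divisible by the appropriate products of the $f_i,f_i'$, reducing $G$ to a combination of the three blocks above. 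Concretely, after accounting for divisibility by $f_1'f_2'f_3'$ and $f_1f_2f_3$ on the two ``mixed'' blocks and by the full product $f_1f_2f_3f_1'f_2'f_3'$ on the linear block, the residual freedom is precisely the linear form $\lambda_0 s_0+\dots+\lambda_3 s_3$ (four parameters) plus three parameters in each mixed block, totalling $4+3+3=10$ coefficients $\lambda_0,\dots,\lambda_9$. Since these ten sections are linearly independent (their leading monomials along the distinct faces differ), one gets $\dim\mathcal{K}=10-1=9$.

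\textbf{Main obstacle.} I expect the hard part to be the reverse inclusion: rigorously deducing, from ``double point along each of the six edges'' together with the forced containment of the nine lines $r_{ij}$, that a degree-$7$ form must decompose into exactly these three blocks and no more. The delicate point is to verify that the double-point conditions along the edges, combined with Remark~\ref{rem:rij contained in K}, are \emph{sufficient} to force the claimed factorizations—one must be careful that restricting to faces and tracking multiplicities along the edges genuinely exhausts the conditions and does not leave extra monomials unaccounted for. An alternative, cleaner route would be a direct dimension count: each edge imposes a vanishing-to-order-two condition whose codimension in $H^0(\mathbb{P}^3,\mathcal{O}(7))$ can be computed, and one checks these conditions are independent so that the expected dimension matches the $\binom{7+3}{3}-1$ ambient dimension minus the total codimension. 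I would present the explicit-form verification as the backbone and use the independence of the ten displayed sections to conclude $\dim\mathcal{K}=9$.
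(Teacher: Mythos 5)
Your outline follows the same route as the paper's proof: first the easy verification that each displayed block is doubly singular along all six edges, then a reverse inclusion obtained by restricting an arbitrary $K\in\mathcal{K}$ to the faces and factoring iteratively, using the forced containment of the nine lines $r_{ij}$ (Remark~\ref{rem:rij contained in K}). But the decisive step --- the reverse inclusion --- is left in your write-up as an announced plan (``should force the claimed factorizations''), and you yourself flag it as the unresolved obstacle; so the proposal is incomplete exactly where the work lies. The missing idea, which is what makes the paper's argument go through, is a degree count on each face, and it is precisely what dispels your worry about ``extra monomials unaccounted for''. Fix the face $f_1$ and let $F$ be the septic form defining $K$. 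The plane septic curve $K\cap f_1$ contains the two edges $l_{12},l_{13}$ with multiplicity at least $2$ (since $K$ is double along them) and the three lines $r_{11},r_{12},r_{13}$; since $2+2+1+1+1=7$ already exhausts the degree, the intersection is \emph{exactly} $2l_{12}+2l_{13}+r_{11}+r_{12}+r_{13}$, i.e.\ $F|_{f_1=0}$ is a scalar multiple of $(f_2^2f_3^2f_1'f_2'f_3')|_{f_1=0}$. Hence $F-\lambda_4 f_1'f_2'f_3'f_3^2f_2^2$ is divisible by $f_1$ for some $\lambda_4\in\mathbb{C}$. Repeating on $f_2,f_3,f_1',f_2',f_3'$ works because at each stage the blocks already split off restrict to zero on the next face (they contain the square of the corresponding linear form, or the form itself), so the quotient polynomial inherits the same forced restriction; one peels off one block per face and terminates with the linear form $\lambda_0s_0+\cdots+\lambda_3s_3$. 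Without this degree count your factorization claim remains unproven.

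A further concrete problem: the ``cleaner alternative'' you propose as a fallback would fail. Vanishing to order two along a line imposes $8+7+7=22$ linear conditions on the $120$-dimensional space $H^0(\mathbb{P}^3,\mathcal{O}_{\mathbb{P}^3}(7))$, so the six edges impose $132>120$ naive conditions, while the solution space is $10$-dimensional: these conditions are very far from independent, and no expected-dimension argument can produce $\dim\mathcal{K}=9$. Finally, note that the paper concludes the dimension differently from your linear-independence finish: it shows that containing the six faces imposes six independent conditions on $\mathcal{K}$ (by exhibiting, for each face, an element of $\mathcal{K}$ containing the previous faces but not that one) and that the residual system $\{K\in\mathcal{K}\,:\,K\supset T\cup T'\}$ is the $3$-dimensional family $T\cup T'\cup\pi$ with $\pi$ a plane, whence $\dim\mathcal{K}=6+3=9$. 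Your finish is also viable, but the parenthetical ``their leading monomials along the distinct faces differ'' needs to be replaced by an actual argument, e.g.\ restricting a vanishing linear combination to each face in turn to kill $\lambda_4,\dots,\lambda_9$, and then cancelling the factor $f_1f_2f_3f_1'f_2'f_3'$ to kill $\lambda_0,\dots,\lambda_3$.
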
 
\begin{proof}
Let $F\in \mathbb{C}\left[ s_0,s_1,s_2,s_3 \right]$ be the homogeneous polynomial of degree $7$ defining a general element $K$ of $\mathcal{K}$. 
The surface $K$ intersects each face $f_i$ of $T$ along the septic curve given by the two double edges contained in that face plus the three lines $r_{ij}$, for $1\le j \le 3$. The same happens with the faces of $T'$. 
This implies that it must be $K\cap f_i=\{f_1'f_2'f_3'f_k^2f_h^2=0,\, f_i = 0\}=2l_{ik}+2l_{ih}+\sum_{j=1}^3r_{ij}$ and $K\cap f_i'=\{f_1f_2f_3{f_k'}^2{f_h'}^2=0,\, f_i' = 0\}= 2l_{ik}'+2l_{ih}'+\sum_{j=1}^3r_{ji}$, for distinct indices $i,k,h\in\{1,2,3\}$. 
Then it must be
$$F(s_0,s_1,s_2,s_3) = f_1g_6(s_0,s_1,s_2,s_3)+\lambda_4 f_1'f_2'f_3'f_3^2f_2^2,$$
where $\lambda_4\in \mathbb{C}$ and $g_6$ is a homogeneous polynomial of degree $6$ such that
$$g_6(s_0,s_1,s_2,s_3) = f_2g_5(s_0,s_1,s_2,s_3)+\lambda_5 f_1'f_2'f_3'f_1f_3^2,$$
where $\lambda_5\in \mathbb{C}$ and $g_5$ is a homogeneous polynomial of degree $5$ such that
$$g_5(s_0,s_1,s_2,s_3) = f_3g_4(s_0,s_1,s_2,s_3)+\lambda_6 f_1'f_2'f_3'f_1f_2,$$
where $\lambda_6\in \mathbb{C}$ and $g_4$ is a homogeneous polynomial of degree $4$ such that
$$g_4(s_0,s_1,s_2,s_3) = f_1'g_3(s_0,s_1,s_2,s_3)+\lambda_7 f_2'^2f_3'^2,$$
where $\lambda_7\in \mathbb{C}$ and $g_3$ is a homogeneous polynomial of degree $3$ such that
$$g_3(s_0,s_1,s_2,s_3) = f_2'g_3(s_0,s_1,s_2,s_3)+\lambda_8 f_1'f_3'^2,$$
where $\lambda_8\in \mathbb{C}$ and $g_2$ is a homogeneous polynomial of degree $2$ such that
$$g_2(s_0,s_1,s_2,s_3) = f_3'(\lambda_0s_0+\lambda_1s_1+\lambda_2s_2+\lambda_3s_3)+\lambda_9 f_1'f_2',$$
where $\lambda_0, \lambda_1, \lambda_2, \lambda_3, \lambda_9\in \mathbb{C}$.
So $F$ has the expression of the statement. Since $\{K\in \mathcal{K}| K\supset f_1\} = \{F=0 | \lambda_4 = 0\}$, then $\operatorname{codim}\left(\{K\in \mathcal{K}| K\supset f_1\},\mathcal{K}\right)=1$. 
Let us see that containing the six faces $f_1$, $f_2$, $f_3$, $f_1'$, $f_2'$, $f_3'$ imposes independent conditions: there exists a septic surface in $\mathcal{K}$ containing $f_1$ but not $f_2$, that is
$\{F=0 | \lambda_4 = 0, \lambda_5 \ne 0\}$;
there exists a septic surface in $\mathcal{K}$ containing $f_1$ and $f_2$ but not $f_3$, that is
$\{F=0 | \lambda_4 = \lambda_5 = 0, \lambda_6\ne 0\}$;
there exists a septic surface in $\mathcal{K}$ containing $f_1$, $f_2$ and $f_3$ but not $f_1'$, that is
$\{F=0 | \lambda_4 = \lambda_5 = \lambda_6 = 0, \lambda_7 \ne 0\}$;
there exists a septic surface in $\mathcal{K}$ containing $f_1$, $f_2$, $f_3$ and $f_1'$ but not $f_2'$, that is
$\{F=0 | \lambda_4 = \lambda_5 = \lambda_6 = \lambda_7 = 0, \lambda_8 \ne 0\}$;
there exists a septic surface in $\mathcal{K}$ containing $f_1$, $f_2$, $f_3$, $f_1'$, and $f_2'$ but not $f_3'$, that is
$$\{F=0 | \lambda_4 = \lambda_5 = \lambda_6 = \lambda_7 = \lambda_8 = 0, \lambda_9 \ne 0\}.$$
Thus, we obtain $\operatorname{codim}(\{K\in \mathcal{K}| K\supset T\cup T'\},\mathcal{K})=6$.
Furthermore, each element of $\{K\in \mathcal{K}| K\supset T\cup T'\}$ is of the form $T\cup T'\cup \pi$, where $\pi$ is a general plane of $\mathbb{P}^3$. Thus, we have $\dim \{K\in \mathcal{K}| K\supset T\cup T'\}= \dim |\mathcal{O}_{\mathbb{P}^3}(1)|=3$ and finally $\dim \mathcal{K}=3+6=9$.
\end{proof}

Let us consider the points mentioned in Remark~\ref{rem:rij contained in K}: they are
$q_{ijk}:=l_{ij}\cap r_{ik} = l_{ij}\cap r_{jk}$ and $q_{ijk}':=l_{ij}'\cap r_{ki}=l_{ij}'\cap r_{kj}$, for $i,j,k\in\{1,2,3\}$ with $i<j$. 
These points also represent the intersection points between the faces of a trihedron and the edges of the other trihedron: indeed, we have that $q_{ijk} = l_{ij}\cap f_k'$ and $q_{ijk}' =l_{ij}'\cap f_k$.

\begin{remark}\label{rem:variabeleTC9}
Let $K$ be a general element of $\mathcal{K}$. By looking locally at the equation of $\mathcal{K}$ (see Proposition~\ref{prop:dimK=9}), one can find that:
\begin{itemize}
\item[(i)] $K$ has triple points at the vertices of $T$ and $T'$ and the tangent cones to $K$ at $v$ and $v'$ are, respectively, $\bigcup_{i=1}^3f_i$ and $\bigcup_{i=1}^3f_i'$;
\item[(ii)] the tangent cone to $K$ at $q_{ijk}$ is $f_i\cup f_j$ and the tangent cone to $K$ at $q_{ijk}'$ is $f_i'\cup f_j'$, for $i,j,k\in\{1,2,3\}$ with $i<j$;
\item[(iii)] if $p\in l_{ij}$, with $p\ne v$ and $p\ne q_{ijk}$, then the tangent cone to $K$ at $p$ is the union of two variable planes containing $l_{ij}$, depending on the choice of the point $p$ and of the surface $K$, and coinciding for finitely many points $p$. Similarly if $p\in l_{ij}'$, with $p\ne v'$ and $p\ne q_{ijk}'$, then the tangent cone to $K$ at $p$ is the union of two elements of $|\mathcal{I}_{l_{ij}'|\mathbb{P}^3}(1)|$ that depend on the choice of $p$ and of $K$ and that can also coincide for finitely many points $p$;
\item[(iv)] $K$ is smooth along $r_{ik}$, except at the points contained in the edges of the two trihedra.
\end{itemize}
\end{remark}

The rational map $\nu_{\mathcal{K}} : \mathbb{P}^{3} \dashrightarrow \mathbb{P}^{9}$ defined by $\mathcal{K}$ is birational onto the image, which is the F-EF 3-fold $W_F^{9}$ of genus $9$ (see \cite[\S 7]{Fa38}).
\begin{theorem}\label{thm:divisibilityK}
Let $S$ be a general hyperplane section of the F-EF 3-fold 
$W_F^{9}\subset \mathbb{P}^{9}$ 
of genus of $9$. Then a general curve section of $W_F^{9}\subset \mathbb{P}^9$ on $S$ is $2$-divisible in $\operatorname{Pic}(S)$. 
\end{theorem}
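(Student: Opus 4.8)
The plan is to follow verbatim the strategy of Theorem~\ref{thm:divisibilitySigma}: resolve the base locus of $\mathcal{K}$ by a sequence of blow-ups of $\mathbb{P}^3$ until one reaches a smooth rational threefold $Y$ carrying a base point free linear system $\widetilde{\mathcal{K}}$ whose associated morphism $\nu_{\widetilde{\mathcal{K}}}\colon Y\to W_F^9\subset\mathbb{P}^9$ is birational onto the image. Writing $\widetilde{K}$ for the strict transform of a general $K\in\mathcal{K}$ and $\mathcal{H}$ for the pullback of the plane class, the curve section on $S=\nu_{\widetilde{\mathcal{K}}}(\widetilde{K})$ is exactly $\widetilde{K}|_{\widetilde{K}}$, so it suffices to prove that this class is $2$-divisible in $\operatorname{Pic}(\widetilde{K})\cong\operatorname{Pic}(S)$.

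First I would blow up the two vertices $v,v'$, with exceptional divisors $\mathcal{E},\mathcal{E}'\cong\mathbb{P}^2$; by Remark~\ref{rem:variabeleTC9}(i) the tangent cone of a general $K$ at each vertex is the triangle formed by the three faces, so once the triple points are resolved one has $\widetilde{K}\cdot\mathcal{E}=\widetilde{K}\cdot\mathcal{E}'=0$, and these two summands will drop out upon restriction to $\widetilde{K}$. Next I would blow up the strict transforms of the six edges $l_{ij},l_{ij}'$ (double curves, contributing with coefficient $2$), and finally the strict transforms of the nine lines $r_{ij}$ — which lie on every $K$ by Remark~\ref{rem:rij contained in K} — together with the triangle lines cut on $\mathcal{E},\mathcal{E}'$ by the faces, exactly as the lines $\gamma_{ij}$ were treated in Theorem~\ref{thm:divisibilitySigma}. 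Tracking multiplicities through these blow-ups by means of Remark~\ref{rem:variabeleTC9} yields a formula of the shape $\widetilde{K}\sim 7\mathcal{H}-3\mathcal{E}-3\mathcal{E}'-2\sum\mathcal{F}-\sum_{ij}R_{ij}-(\text{triangle terms})$, whose restriction to $\widetilde{K}$ kills the $\mathcal{E},\mathcal{E}'$ summands.

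The crucial difference from the genus-$13$ case is that here the degree $7$ is odd, so one cannot conclude $2$-divisibility merely from the parity of the surviving coefficients (the coefficient of $\mathcal{H}$ is odd, and so is the number of lines $r_{ij}$). The extra structure I would exploit is that, for a general plane $\pi$, the septic $T\cup T'\cup\pi$ is itself a reducible member of $\mathcal{K}$ — indeed by Proposition~\ref{prop:dimK=9} these are precisely the members containing $T\cup T'$ — so the curve section is linearly equivalent to $(T\cup T'\cup\pi)\cap K$. By the face decomposition $f_i\cap K=2l_{ik}+2l_{ih}+\sum_j r_{ij}$ established in the proof of Proposition~\ref{prop:dimK=9}, each edge appears in $(T+T')\cap K$ with the even multiplicity $4$, while each line $r_{ij}=f_i\cap f_j'$, lying in exactly one face of each trihedron, appears with multiplicity $2$; hence $(T+T')\cap K$ is an even divisor and the whole odd parity is carried by $\pi\cap K\equiv\mathcal{H}$. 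I would then transport this same decomposition through the resolution to rewrite $\mathcal{H}|_{\widetilde{K}}$ modulo $2$ in terms of the $R_{ij}$ and the triangle-exceptionals, so that the parity coming from $7\mathcal{H}$ is exactly absorbed and $\widetilde{K}|_{\widetilde{K}}=2D$ for an explicit class $D$.

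The main obstacle is precisely this multiplicity bookkeeping: one must compute, through the successive blow-ups, the exact coefficients of the exceptional divisors lying over the nine lines $r_{ij}$ and over the tangent-cone triangles, and check that after discarding $\mathcal{E},\mathcal{E}'$ the remaining class is genuinely $2$-divisible. The delicate points are the special points $q_{ijk},q_{ijk}'$, where a line $r_{ij}$ meets an edge and the tangent cone degenerates to $f_i\cup f_j$ (Remark~\ref{rem:variabeleTC9}(ii)): there several base components of $\mathcal{K}$ cross, so the naive multiplicities must be corrected, and controlling these local intersections is exactly what makes the parity cancellation go through.
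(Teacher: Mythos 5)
Your overall strategy is indeed the paper's: resolve the base locus of $\mathcal{K}$ by iterated blow-ups until $\widetilde{\mathcal{K}}$ is base point free, and kill the odd parity of the degree $7$ by exploiting that the trihedra cut $K$ only along base curves. Your reducible member $T\cup T'\cup\pi$ and the face decomposition are the same geometric input that the paper uses, packaged differently: on the final model $Y$ the paper observes that the strict transform $\widetilde{T}'$ of the single cubic $T'$ is disjoint from $\widetilde{K}$ (since $T'\cap K$ consists entirely of blown-up base curves), so that $\widetilde{T}'|_{\widetilde{K}}\sim 0$; writing $\widetilde{T}'$ as $3\mathcal{H}$ minus explicit exceptional terms and adding this relation to the curve-section class
$$\big(7\mathcal{H}-2\textstyle\sum(\mathcal{F}_{ij}+\mathcal{F}_{ij}')-\sum\mathcal{R}_{ij}-4\sum(\Gamma_i+\Gamma_i')-3\sum(\Lambda_{ijk,h}+\Lambda_{ijk,h}')\big)\big|_{\widetilde{K}}$$
yields a manifestly $2$-divisible class. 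This is precisely your ``rewrite $\mathcal{H}|_{\widetilde{K}}$ modulo $2$'' step, and one trihedron suffices because $3\equiv 1\pmod 2$. Note that the reducible member alone does not close the argument: restricting the member of $\widetilde{\mathcal{K}}$ lying over $T\cup T'\cup\pi$ and using disjointness leaves the residue $(\mathcal{H}+\sum\mathcal{R}_{ij}+\sum(\Lambda_{ijk,h}+\Lambda_{ijk,h}'))|_{\widetilde{K}}$, and it is again the relation $\widetilde{T}'|_{\widetilde{K}}\sim 0$ that shows this residue is even.

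The genuine gap is in the resolution itself, and it sits exactly where you park it as ``the main obstacle''. Your list of centers (vertices; then edges; then the lines $r_{ij}$ and the triangle lines on $\mathcal{E},\mathcal{E}'$) does not resolve the base locus: at the $18$ points $q_{ijk}=l_{ij}\cap f_k'$ and $q_{ijk}'=l_{ij}'\cap f_k$ the tangent cone of every $K\in\mathcal{K}$ degenerates to the fixed pair of planes $f_i\cup f_j$ (resp.\ $f_i'\cup f_j'$) by Remark~\ref{rem:variabeleTC9}, so fixed base components survive over those points after your curve blow-ups, and they are not in your list. The paper's device --- the idea missing from your plan --- is to blow up those $18$ points \emph{first}, together with the two vertices. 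This has two effects: the six edges and nine lines $r_{ij}$ become pairwise disjoint (away from the vertices they meet one another exactly at the $q$-points), so they can be blown up cleanly; and the degenerate tangent cones are converted into the $36$ lines $\lambda_{ijk,h},\lambda_{ijk,h}'$ on the point-exceptional divisors, which form the last round of centers. The corresponding exceptional divisors $\Lambda_{ijk,h},\Lambda_{ijk,h}'$ enter the classes of $\widetilde{K}$ and $\widetilde{T}'$ with odd coefficients and are indispensable to the parity cancellation; without them the final formulas cannot even be written down. So the ``multiplicity corrections'' you defer are not routine bookkeeping to be checked at the end --- supplying the centers that make them computable is the substance of the paper's proof.
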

\begin{proof}
The idea of the proof is similar to the one of the proof of Theorem~\ref{thm:divisibilitySigma}. 
First we blow-up $\mathbb{P}^3$ at the vertices of the trihedra and at the $18$ points $q_{ijk}$ and $q_{ijk}'$, for $i,j,k\in\{1,2,3\}$ and $i<j$. We obtain a smooth threefold $Y'$ and a birational morphism $bl' : Y '\to \mathbb{P}^{3}$ with exceptional divisors $E := (bl')^{-1}(v)$, $E' := (bl')^{-1}(v')$, $E_{ijk}:=(bl')^{-1}(q_{ijk})$, $E_{ijk}':=(bl')^{-1}(q_{ijk}')$. 
Let $\mathcal{K}'$ be the strict transform of $\mathcal{K}$ and let us denote by $\mathfrak{h}$ the pullback on $Y'$ of the hyperplane class on $\mathbb{P}^{3}$. Then an 
element of $\mathcal{K}'$ is linearly equivalent to $7\mathfrak{h}-3E-3E'-2\sum_{\substack{i,j,k=1 \\ i<j }}^3(E_{ijk}+E_{ijk}')$. Let $\widetilde{f}_i$ and $\widetilde{f}_i'$ be the strict transforms of the faces $f_i$ and $f_i'$, for $1\le i \le 3$. 
We denote by $\gamma_{i}:=E\cap \widetilde{f}_i$ the line cut out by $\widetilde{f}_i$ on $E$ and by $\gamma_{i}':=E'\cap \widetilde{f}_i'$ the one cut out by $\widetilde{f}_i'$ on $E'$. 
If $K'$ is the strict transform of a general $K\in\mathcal{K}$, then $K'\cap E = \bigcup_{i= 0}^3 \gamma_{i}$ and $K'\cap E' = \bigcup_{i=0}^3 \gamma_{i}'$ and $K'$ is smooth at a general point of $\gamma_{i}$ and of $\gamma_{i}'$ (see Remark~\ref{rem:variabeleTC9}).
We also take the lines $\lambda_{ijk,h}:=E_{ijk}\cap \widetilde{f}_h$ and $\lambda_{ijk,h}':=E_{ijk}'\cap \widetilde{f}_h'$, where $i,j,k\in\{1,2,3\}$ with $i<j$ and $h\in\{i,j\}$. 
We have that $K'\cap E_{ijk} =\bigcup_{h=i,j} \lambda_{ijk,h}$ and $K'\cap E_{ijk}' = \bigcup_{h=i,j}\lambda_{ijk,h}'$ (see Remark~\ref{rem:variabeleTC9}). 
Let us consider the strict transforms $\widetilde{l}_{ij}$, $\widetilde{l}_{ij}'$ and $\widetilde{r}_{ij}$ of the lines $l_{ij}$, $l_{ij}'$ and $r_{ik}$, for $i,j,k\in\{1,2,3\}$ and $i<j$. Then the base locus of $\mathcal{K}'$ is given by the union of the six curves $\widetilde{l}_{ij}$, $\widetilde{l}_{ij}'$ (along which a general $K'\in\mathcal{K}'$ has double points), of the nine curves $\widetilde{r}_{ik}$, of the six lines $\gamma_i$, $\gamma_i'$, and of the $36$ lines $\lambda_{ijk,h}$, $\lambda_{ijk,h}'$ (see Remark~\ref{rem:variabeleTC9}).
Let us blow-up $Y'$ along the strict transforms of the edges of the trihedra and of the nine lines $r_{ij}$. We obtain a smooth threefold $Y''$ and a birational morphism $bl'' : Y'' \to Y'$ with exceptional divisors 
$F_{ij}:=(bl'')^{-1}(\widetilde{l}_{ij})$,
$F_{ij}':=(bl'')^{-1}(\widetilde{l}_{ij}')$,
$R_{ij}:=(bl'')^{-1}(\widetilde{r}_{ij})$.
Let us denote by $\widetilde{E}$, $\widetilde{E}'$, $\widetilde{E}_{ijk}$ and $\widetilde{E}_{ijk}'$ respectively the strict transforms of $E$, $E'$, $E_{ijk}$ and $E_{ijk}'$. 
Let $\mathcal{K}''$ be the strict transform of $\mathcal{K}'$: an 
element of $\mathcal{K}''$ is linearly equivalent to 
$7\mathfrak{h}-3\widetilde{E}-3\widetilde{E}'-2\sum_{\substack{i,j,k=1 \\ i<j }}^3(\widetilde{E}_{ijk}+\widetilde{E}_{ijk}')-2\sum_{1\le i < j \le 3} (F_{ij}+F_{ij}')-\sum_{i,j=1}^3R_{ij},$
where, by abuse of notation, $\mathfrak{h}$ also denotes the pullback $bl''^* \mathfrak{h}$.
By Remark~\ref{rem:variabeleTC9} 
we have that the base locus of $\mathcal{K}''$ is given by the disjoint union of the strict transforms $\widetilde{\gamma}_{i}$, $\widetilde{\gamma}_{i}'$, $\widetilde{\lambda}_{ijk,h}$, $\widetilde{\lambda}_{ijk,h}'$ of the 42 lines defined as above. 
Finally let us consider the blow-up of $Y''$ along these $42$ curves, which is the map $bl''' : Y \to Y''$ with exceptional divisors $\Gamma_{i}:=bl'''^{-1}(\widetilde{\gamma}_{i})$, $\Gamma_{i}':=bl'''^{-1}(\widetilde{\gamma}_{i}')$, $\Lambda_{ijk,h}:=bl'''^{-1}(\widetilde{\lambda}_{ijk,h})$, $\Lambda_{ijk,h}':=bl'''^{-1}(\widetilde{\lambda}_{ijk,h}')$.  
We denote by $\mathcal{E}$, $\mathcal{E}'$, $\mathcal{E}_{ijk}$, $\mathcal{E}_{ijk}'$, respectively, the strict transform of $\widetilde{E}$, $\widetilde{E}'$, $\widetilde{E}_{ijk}$, $\widetilde{E}_{ijk}'$; by $\mathcal{F}_{ij}$ the strict transform of $F_{ij}$; by $\mathcal{R}_{ik}$ the strict transform of $R_{ik}$; by $\mathcal{H}$ the pullback of $\mathfrak{h}$, for $i,j,k\in\{1,2,3\}$ with $i< j$ and $h\in\{i,j\}$. 
Let $\widetilde{K}$ be the strict transform on $Y$ of an 
element of $\mathcal{K}''$: then
$$\widetilde{K} \sim 7\mathcal{H}-3\mathcal{E}-3\mathcal{E}'-2\sum_{\substack{i,j,k=1 \\ i<j }}^3(\mathcal{E}_{ijk}+\mathcal{E}_{ijk}')-2\sum_{1\le i < j \le 3}(\mathcal{F}_{ij}+\mathcal{F}_{ij}')-\sum_{i,j=1}^{3}\mathcal{R}_{ij}+$$
$$-4\sum_{i=1}^{3}(\Gamma_i+\Gamma_i')-3\sum_{ \substack{i,j,k=1 \\ i<j,\, h=i,j }}^3 (\Lambda_{ijk,h}+\Lambda_{ijk,h}').$$
Let us take the linear system $\widetilde{\mathcal{K}}:=|\mathcal{O}_{Y}(\widetilde{K})|$ on $Y$. It is base point free and it defines a morphism $\nu_{\widetilde{\mathcal{K}}}: Y \to \mathbb{P}^{9}$ birational onto the image, which is the F-EF 3-fold $W_F^{9}$. 
In particular we have the following diagram:
$$\begin{tikzcd}
Y \arrow[d, "bl'''"] \arrow[drrr, "\nu_{\widetilde{\mathcal{K}}}"] & & & \\
Y''  \arrow{r}{bl''} & Y' \arrow{r}{bl'} & \mathbb{P}^3 \arrow[dashrightarrow]{r}{\nu_{\mathcal{K}}} & W_F^{9} \subset \mathbb{P}^{9}.
\end{tikzcd}$$
We observe that there is only one cubic surface in $\mathbb{P}^3$ which is singular along the edges of the trihedron $T'$, that is $T'$ itself. Let us consider its strict transform on $Y$, which is
$$\widetilde{T}'\sim 3\mathcal{H}-3\mathcal{E}'-\sum_{\substack{i,j,k=1 \\ i<j}}^{3}(\mathcal{E}_{ijk}+2\mathcal{E}_{ijk}')-\sum_{i=1}^{3}2\mathcal{F}_{ij}'-\sum_{i,j=1}^{3}\mathcal{R}_{ij}+$$
$$-\sum_{i=1}^{3}4\Gamma_i'-\sum_{ \substack{ i,j,k\in \{1,2,3\} \\ i< j,\,\, h=i,j } }(\Lambda_{ijk,h}+3\Lambda_{ijk,h}').$$
Let $\widetilde{K}$ be the divisor of $\mathcal{\widetilde{K}}$ such that $\nu_{\widetilde{\mathcal{K}}}(\widetilde{K})=S$, where $S$ is a general hyperplane section of $W_F^{9}\subset \mathbb{P}^{9}$. By construction we have
$\widetilde{K}\cdot \mathcal{E} = \widetilde{K}\cdot \mathcal{E} = \widetilde{K}\cdot \mathcal{E}_{ijk} = \widetilde{K}\cdot \mathcal{E}_{ijk}' = 0$, for all $i,j,k\in\{1,2,3\}$ with $i<j$.
Thus, we have that
$$0\sim \widetilde{T}'|_{\widetilde{K}}\sim (3\mathcal{H}-\sum_{i=1}^{3}2\mathcal{F}_{ij}'-\sum_{i,j=1}^{3}\mathcal{R}_{ij}-\sum_{i=1}^{3}4\Gamma_i'-\sum_{ \substack{ i,j,k\in \{1,2,3\} \\ i< j,\,\, h=i,j } }(\Lambda_{ijk,h}+3\Lambda_{ijk,h}'))|_{\widetilde{K}}.$$
Then a general curve section of $W_F^9\subset \mathbb{P}^9$ on $S$ corresponds to the following divisor on $\widetilde{K}$:
$$\Big(7\mathcal{H}-\sum_{1\le i < j \le 3}2(\mathcal{F}_{ij}+\mathcal{F}_{ij}')-\sum_{i,j=1}^{3}\mathcal{R}_{ij}-\sum_{i=1}^{3}4(\Gamma_i+\Gamma_i')-\sum_{ \substack{i,j,k=1 \\ i<j,\\ h=i,j }}^3 3(\Lambda_{ijk,h}+\Lambda_{ijk,h}')\Big)|_{\widetilde{K}}\sim $$
$$\sim (4\mathcal{H}-\sum_{1\le i < j \le 3}2\mathcal{F}_{ij}-\sum_{i=1}^{3}4\Gamma_i-\sum_{ \substack{i,j,k=1 \\ i<j}}^3 2\Lambda_{ijk,i})|_{\widetilde{K}},$$
which is $2$-divisible. Thus, we obtain the assertion. 
\end{proof}

\section{Numerical equivalence class of the curve sections of the P-EF 3-fold of genus 13}\label{subsec:pro13}

In this section we prove the numerical 2-divisibility of the curve sections of the P-EF 3-fold of genus $13$, which we will use in the proof of Theorem~\ref{thm:SID} to determine their possible SID. In order to do this, we describe, in more details, the P-EF 3-fold $W_P^{13}$, since it was mentioned very briefly in \cite[Remark 3.3]{Pro07}. We use the techniques used by Prokhorov in \cite[\S 3]{Pro07} for the construction of the P-EF 3-fold $W_P^{17}$.

Let us consider the linear system of the plane cubic curves passing through three fixed points $a_1$, $a_2$, $a_3$ in general position. Up to a change of coordinates, we may assume $a_1=[1:0:0]$, $a_2=[0:1:0]$ and $a_3=[0:0:1]$ in $\mathbb{P}^2_{[u_0:u_1:u_2]}$. The afore-mentioned linear system so defines the rational map $\lambda : \mathbb{P}^2 \dashrightarrow \mathbb{P}^{6}$ given by
$$\left[u_0 : u_1 : u_2\right] \mapsto \left[u_1^2 u_2 : u_1 u_2^2 : u_0^2 u_2 : u_0 u_2^2 : u_0^{2} u_1 : u_0 u_1^2 : u_0 u_1 u_2\right],$$
whose image is a smooth sextic Del Pezzo surface $S_6\subset \mathbb{P}^{6}$. If $bl : \operatorname{Bl}_{a_1,a_2,a_3}\mathbb{P}^2 \to \mathbb{P}^2$ denotes the blow-up of the plane at the three fixed points, then we have that $S_6$ is isomorphic to $\operatorname{Bl}_{a_1,a_2,a_3}\mathbb{P}^2$ and that it is anticanonically embedded in $\mathbb{P}^{6}$. Let $\ell$ be the pullback of the line class on $\mathbb{P}^{2}$ and let $e_i := bl^{-1}(a_i)$ be the exceptional divisors, for $1\le i \le 3$; then we have the following commutative diagram
$$\begin{tikzcd}
\operatorname{Bl}_{a_1,a_2,a_3}\mathbb{P}^{2} \arrow[d, "bl"] \arrow[dr, "\cong"', "\widetilde{\lambda}_{|3\ell - e_1-e_2-e_3|} = \widetilde{\lambda}_{|-K_{S_6}|}"] & \\
\mathbb{P}^{2} \arrow[r, dashrightarrow, "\lambda"']  & S_6 \subset \mathbb{P}^{6}.
\end{tikzcd}$$
Let us consider $\mathbb{P}^{6}_{[x_0:x_1:x_2:x_3:x_4:x_5:x_6]}$ as the hyperplane $\{y_0=0\} \subset \mathbb{P}^7_{[x_0:\dots:x_6:y]}$ and let us take the cone $V$ over $S_6$ with vertex $v:=[0:0:0:0:0:0:0:1]$.

\begin{remark}\label{rem: ideal Vp13}
Since the ideal of $S_6$ is generated by the following polynomials
$$x_3x_5-x_6^2,\quad x_2x_5-x_4x_6,\quad x_1x_5-x_0x_6,\quad x_3x_4-x_2x_6,\quad x_1x_4-x_6^2,$$
$$x_0x_4-x_5x_6,\quad x_0x_3-x_1x_6,\quad x_1x_2-x_3x_6,\quad x_0x_2-x_6^2,$$
in $\mathbb{C}[x_0,x_1,x_2,x_3,x_4,x_5,x_6]$, then the ideal of $V$ is generated by the same polynomials as polynomials in $\mathbb{C}[x_0,x_1,x_2,x_3,x_4,x_5,x_6,y]$.
\end{remark}

\begin{lemma}\label{lem:V withcanonicalvertex 13}
The variety $V$ is a Gorenstein Fano threefold with canonical singularities. Moreover, $-K_V = 2M$ where $M$ is the class of the hyperplane sections.
\end{lemma}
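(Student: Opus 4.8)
The plan is to desingularize $V$ by a single blow-up of the vertex $v$ and to read off both the canonical class and the type of singularity from the resulting $\mathbb{P}^1$-bundle over $S_6$. First I would record that $V$ is normal: since $S_6$ is a del Pezzo surface of degree $6$, it is projectively normal in its anticanonical embedding $S_6\subset\mathbb{P}^6$ (it is arithmetically Cohen--Macaulay and cut out by the quadrics of Remark~\ref{rem: ideal Vp13}), so the affine cone over it is normal and hence so is its projective closure $V$. Thus $V$ is a normal projective threefold, singular only at $v$.

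Next I would construct the resolution. Writing $L:=\mathcal{O}_{S_6}(1)=-K_{S_6}$ and blowing up $v$, I obtain $\pi:\widetilde{V}\to V$ with $\widetilde{V}\cong\mathbb{P}_{S_6}(\mathcal{O}_{S_6}\oplus L)$, a smooth $\mathbb{P}^1$-bundle $p:\widetilde{V}\to S_6$; since $\pi$ is an isomorphism away from its exceptional divisor $E$, it is a genuine resolution. The key identifications are that $E$ is the section of $p$ with $E\cong S_6$ and normal bundle $\mathcal{O}_E(E)\cong L^{-1}=\mathcal{O}_{S_6}(K_{S_6})$, and that the tautological class $\xi:=\mathcal{O}_{\widetilde{V}}(1)$ is globally generated with $H^0(\widetilde{V},\xi)=H^0(S_6,\mathcal{O}_{S_6})\oplus H^0(S_6,L)$ of dimension $1+7=8$; hence $|\xi|$ realizes $\pi$ followed by $V\hookrightarrow\mathbb{P}^7$, so that $\pi^{*}M=\xi$. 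Using $\det(\mathcal{O}_{S_6}\oplus L)=L$ and $K_{S_6}+L=0$, the relative canonical bundle formula gives $K_{\widetilde{V}}=-2\xi+p^{*}(K_{S_6}+L)=-2\xi$.

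Finally I would compute the discrepancy. Writing the contracted section as $E\sim\xi-p^{*}L$ and setting $K_{\widetilde{V}}=\pi^{*}K_V+aE$, a comparison of the $\xi$- and $p^{*}$-components of $-2\xi=\pi^{*}K_V+a(\xi-p^{*}L)$ forces $K_{S_6}+L=-aL$, whence $a=0$, and then $K_V=-2M$, i.e. $-K_V=2M$. Since $M=\mathcal{O}_V(1)$ is Cartier, $K_V=-2M$ is Cartier and $V$ is Gorenstein; the unique exceptional divisor of the resolution has discrepancy $a=0\ge 0$ (the resolution is in fact crepant), so the singularity at $v$ is canonical; and $-K_V=2M$ is ample, so $V$ is a Fano threefold.

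The main obstacle is purely the bookkeeping of conventions: one must pin down the $\mathbb{P}(\,\cdot\,)$-normalization so that the contracted section $E$ has normal bundle $\mathcal{O}_{S_6}(K_{S_6})$ and so that $\pi^{*}M=\xi$, because both the vanishing discrepancy and the coefficient $2$ in $-K_V=2M$ hinge on these two identifications. Once they are fixed, the remaining steps (normality of $V$, the relative canonical formula, and the matching of divisor classes) are routine.
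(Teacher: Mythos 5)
Your proposal is correct and takes essentially the same route as the paper: normality of $V$ from projective normality of $S_6$, identification of the blow-up of the vertex with $\mathbb{P}\left(\mathcal{O}_{S_6}\oplus \mathcal{O}_{S_6}(-K_{S_6})\right)$, the relation $\sigma^* M \sim \mathcal{O}_{\operatorname{Bl}_v V}(1)$ via the tautological system, $K_{\operatorname{Bl}_v V}\sim \mathcal{O}_{\operatorname{Bl}_v V}(-2)$, discrepancy $a=0$, and hence $K_V=-2M$ with $V$ Gorenstein, canonical and Fano. The only cosmetic differences are that you derive $K_{\operatorname{Bl}_v V}\sim -2\xi$ from the relative canonical bundle formula rather than citing Reid, and you obtain $a=0$ by comparing components in $\operatorname{Pic}$ of the bundle (implicitly using that $\pi^* K_V$ has trivial $p^*$-component, i.e.\ is numerically trivial on the contracted section) rather than by intersecting with $E$ as the paper does; these amount to the same computation.
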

\begin{proof}
Since $S_6\subset \mathbb{P}^{6}$ is projectively normal (see \cite[Theorem 8.3.4]{Dolg12}), then $V$ is normal. 
Let $\sigma : \operatorname{Bl}_{v}V \to V$ be the blow-up of $v$ with exceptional divisor $E=\sigma^{-1}(v)$. Then $\operatorname{Bl}_{v}V$ is a $\mathbb{P}^{1}$-bundle over $S_6$ and $\sigma$ contracts its negative section $E$ to $v$. In particular we have $\operatorname{Bl}_{v}V = \mathbb{P}\left(\mathcal{O}_{S_6}\oplus \mathcal{O}_{S_6}(-K_{S_6})\right)$ (see \cite[V, Ex. 2.11.4]{Hart}). Since the map $\sigma : \operatorname{Bl}_{v}V \to V \subset \mathbb{P}^{7}$ is given by the tautological linear system $|\mathcal{O}_{\operatorname{Bl}_{v}V}(1)|$, then $\mathcal{O}_{\operatorname{Bl}_{v}V}(1)\sim \sigma^* M$.
A priori we have that $K_{\operatorname{Bl}_{v}V}=\sigma^* K_V+aE$ for $a\in \mathbb{Q}$.  
Since $K_{\operatorname{Bl}_{v}V}\sim \mathcal{O}_{\operatorname{Bl}_{v}V}(-2)$ (see \cite[p. 349 (d)]{Re87}) and $K_{\operatorname{Bl}_{v}V}\cdot E =-2 (\sigma^* M) \cdot E =0$, then $a=0$ and $K_V$ is a Cartier divisor. 
Thus, $V$ has a canonical singularity at the vertex $v$. Finally, since $\sigma^* (-2M) \sim \mathcal{O}_{\operatorname{Bl}_{v}V}(-2) \sim K_{\operatorname{Bl}_{v}V} = \sigma^* K_V$, we have that $K_V = -2M$.
\end{proof}

The quadratic transformation $q_{a_1,a_2,a_3} : \mathbb{P}^2 \dashrightarrow \mathbb{P}^2$, given by the linear system of the conics passing through $a_1$, $a_2$ and $a_3$, defines an involution of the sextic Del Pezzo $S_6\subset \mathbb{P}^6$. Indeed, we have
\begin{footnotesize}
$$\begin{tikzcd}
\left[u_0 : u_1 : u_2\right] \arrow[r, mapsto, "q_{a_1,a_2,a_3}"] \arrow[d, mapsto, "\lambda"] & \left[\frac{1}{u_0} : \frac{1}{u_1} : \frac{1}{u_2}\right] \arrow[dd, mapsto, "\lambda"]\\
\hspace{-1cm}\left[u_1^2 u_2 : u_1 u_2^2 : u_0^2 u_2 : u_0 u_2^2 : u_0^{2} u_1 : u_0 u_1^2 : u_0 u_1 u_2\right] & \\
& \hspace{-0.6cm} \left[u_0^2 u_2 : u_1 u_0^2 : u_1^2 u_2 : u_0 u_1^2 : u_1 u_2^2 : u_0 u_2^2 : u_0 u_1 u_2\right]
\end{tikzcd}$$
\end{footnotesize}
and then we obtain the involution $t'$ of $S_6\subset \mathbb{P}^{6}$ given by
$$\begin{tikzcd}
\hspace{-1cm}\left[x_0 : x_1 : x_2 : x_3 : x_4 : x_5 : x_6\right] \arrow[r, mapsto, "t' "] & \left[x_2 : x_4 : x_0 : x_5 : x_1 : x_3 : x_6\right].
\end{tikzcd}$$
Let us take the involution of $\mathbb{P}^7$ defined by $t: \mathbb{P}^7 \to \mathbb{P}^7$ such that
$$\begin{tikzcd}
\left[x_0 : x_1 : x_2 : x_3 : x_4 : x_5 : x_6: y\right] \arrow[r, mapsto] & \left[x_2 : x_4 : x_0 : x_5 : x_1 : x_3 : x_6: -y\right].
\end{tikzcd}$$
The locus of $t$-fixed points in $\mathbb{P}^{7}$ consists of two projective subspaces
$$F_1 = \{x_0+x_2 = x_1+x_4 = x_3+x_5 = x_6 = 0\}\cong \mathbb{P}^{3},$$
$$F_{2} = \{x_0-x_2 = x_1-x_4 = x_3-x_5 = y = 0\} \cong \mathbb{P}^{3}.$$
In particular we have that $F_1\cap V = \{v\}$ and $F_{2}\cap V = \{v_1,v_2,v_3,v_4\}$, where
$$v_1 := \left[1:1:1:1:1:1:1:0 \right], \quad v_2 :=\left[1:-1:1:-1:-1:-1:1:0\right],$$
$$v_3 := \left[-1:1:-1:-1:1:-1:1:0\right], \quad v_4 := \left[-1:-1:-1:1:-1:1:1:0\right].$$
Thus, $t$ induces an involution $\tau := t |_{V}$ of $V$ with five fixed points.  

\begin{proposition}\label{prop: definition of pro W13}
The quotient of $V$ by the involution $\tau$ is an Enriques-Fano threefold of genus $p=13$, which we will denote by $W_{P}^{13}$.
\end{proposition}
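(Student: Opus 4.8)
The plan is to realize $W_P^{13}=V/\tau$ together with the linear system obtained by descending the $\tau$-invariant part of the anticanonical system of $V$, and to verify the defining properties of an Enriques--Fano threefold one at a time. Since $V$ is normal (Lemma~\ref{lem:V withcanonicalvertex 13}) and $\tau$ is an involution, the quotient $W:=V/\tau$ is a normal threefold and $\pi\colon V\to W$ is finite of degree two, étale away from the five fixed points $v,v_1,v_2,v_3,v_4$. Because $-K_V=2M$ with $M$ the hyperplane class (Lemma~\ref{lem:V withcanonicalvertex 13}), I would take $\mathcal{L}$ to be the linear system on $W$ with $\pi^{*}\mathcal{L}=|2M|^{\tau}$, the $\tau$-invariant quadric sections of $V$; concretely $\mathcal{L}=|S|$ with $\pi^{-1}(S)=\widetilde{S}\in|{-}K_V|$. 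The crux is then that for general invariant $\widetilde{S}$ the restriction $\tau|_{\widetilde{S}}$ is a fixed-point-free involution of a K3 surface, so that $S=\widetilde{S}/\tau$ is an Enriques surface.

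To make this precise I would first check that a general $\widetilde{S}\in|{-}K_V|^{\tau}$ is a smooth K3 surface: a general quadric has nonzero $y^{2}$-coefficient, hence $\widetilde{S}$ avoids the vertex $v$ (the only singular point of $V$), so $\widetilde{S}$ is smooth by Bertini; it is K3 since $K_{\widetilde{S}}=(K_V+\widetilde{S})|_{\widetilde{S}}=0$ and $h^{1}(\mathcal{O}_{\widetilde{S}})=0$ (from $0\to\mathcal{O}_V(K_V)\to\mathcal{O}_V\to\mathcal{O}_{\widetilde{S}}\to 0$ together with $h^{1}(\mathcal{O}_V)=h^{2}(\mathcal{O}_V(K_V))=0$). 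The decisive step is that the general invariant $\widetilde{S}$ also avoids $v_1,\dots,v_4$, so that $\tau|_{\widetilde{S}}$ is free: at $v$ the invariant quadric $y^{2}$ does not vanish, while the relation $x_0x_2=x_6^{2}$ of Remark~\ref{rem: ideal Vp13} gives an invariant quadric taking the value $1$ at each of $v_1,\dots,v_4$ (all of which have $x_6=1$). Hence none of the five fixed points is a base point of $|2M|^{\tau}$, the general invariant $\widetilde{S}$ meets none of them, $\tau$ acts freely on $\widetilde{S}$, and $S=\widetilde{S}/\tau$ is a smooth Enriques surface. Since such a general $S$ then avoids $\mathrm{Sing}(W)$ (contained in $\pi(\{v,v_1,\dots,v_4\})$), the sheaf $\mathcal{O}_W(S)$ is locally free, so $S$ is Cartier; and $\mathcal{L}$ is ample because $\pi^{*}\mathcal{L}=|2M|$ is ample and $\pi$ is finite.

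For the genus I would use the $\tau$-eigenspace decomposition. In the coordinates $x_0+x_2,\,x_1+x_4,\,x_3+x_5,\,x_6$ (eigenvalue $+1$) and $x_0-x_2,\,x_1-x_4,\,x_3-x_5,\,y$ (eigenvalue $-1$), the invariant quadrics on $\mathbb{P}^{7}$ are the products of two coordinates of the same eigenspace, of dimension $\binom{5}{2}+\binom{5}{2}=10+10=20$; a direct check shows that the nine generators of the ideal of $V$ split into six invariant and three anti-invariant ones, whence $\dim H^{0}(V,\mathcal{O}_V(2M))^{\tau}=20-6=14$ and $\dim\mathcal{L}=13$. This is confirmed by the degree computation $S^{3}=\tfrac{1}{2}(2M)^{3}=\tfrac{1}{2}\cdot 8M^{3}=\tfrac{1}{2}\cdot 8\cdot 6=24=2\cdot 13-2$, using $M^{3}=\deg V=\deg S_6=6$; hence $p=\tfrac{S^{3}}{2}+1=13$.

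Finally I would rule out that $W$ is a generalized cone over $S$ via its singularity configuration. Away from the five fixed points $\pi$ is étale and $V$ is smooth, so $\mathrm{Sing}(W)$ consists of the five distinct image points: $\pi(v)$, a non-terminal canonical point arising from the del Pezzo cone, and $\pi(v_1),\dots,\pi(v_4)$, four $\tfrac{1}{2}(1,1,1)$ points (at each smooth fixed point $\tau$ acts as $-\mathrm{id}$ on the tangent space, $F_2\cap V$ being isolated) whose tangent cone is a cone over the Veronese surface. A generalized cone over the \emph{smooth} Enriques surface $S$ has exactly one singular point, its vertex, since the complement of the vertex is a line bundle over $S$; as $W$ has at least five distinct singular points, it cannot be such a cone. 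The step I expect to be the main obstacle is the freeness verification: everything hinges on showing that the general $\tau$-invariant anticanonical member misses all five fixed points, for only then is the quotient a genuine smooth Enriques surface rather than one acquiring quotient singularities, and this is exactly what the explicit invariant quadrics $y^2$ and $x_0x_2=x_6^2$ secure.
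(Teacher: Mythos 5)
Your proof is correct and follows essentially the same route as the paper: both take the $\tau$-invariant quadric sections of $V$ (your eigenspace description is exactly the paper's $q_1+q_2$ form), show the general member is a smooth K3 surface missing the five fixed points so that $\tau$ acts freely and the quotient is a smooth Enriques surface, and compute $S^3=\frac{1}{2}(2M)^3=24$ to get $p=13$. You additionally verify points the paper leaves implicit---the dimension count $\dim\mathcal{L}=13$, the Cartier/ampleness properties, and the fact that $W$ is not a generalized cone (via its five singular points versus the unique vertex singularity of a cone)---which only strengthens the argument.
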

\begin{proof}
Let $\mathcal{Q}_{V}$ be the linear system that is cut out on $V$ by the linear system $\mathcal{Q}$ of the quadric hypersurfaces of $\mathbb{P}^{7}$ of type
$$q_1(x_0+x_2,x_1+x_4,x_3+x_5,x_6)+q_{2}(x_0-x_2,x_1-x_4,x_3-x_5,y)=0,$$
where $q_1$ and $q_2$ are quadratic homogeneous forms. By construction, we have that $\mathcal{Q}_{V}$ is base point free and each member of $\mathcal{Q}_{V}$ is $\tau$-invariant. In particular a general member $\widetilde{S}\in \mathcal{Q}_{V}$ is smooth and does not contain any of $v,v_1,v_2,v_3,v_4$. Then the action of $\tau$ on $\widetilde{S}$ is fixed point free. Moreover $\widetilde{S}$ is a K3 surface, since $\mathcal{Q}_{V} \subset |2M|=|-K_V|$. Let $\pi : V \to W_{P}^{13} := V/\tau$ be the quotient morphism and let $S:= \pi(\widetilde{S})= \widetilde{S} /\tau$. Then $S$ is a smooth Enriques surface. Since $\widetilde{S}=\pi^* S$, we have $2p-2=S^3=\frac{1}{2}\widetilde{S}^3=\frac{1}{2}(2M)^3=4\cdot \deg V = 24$, whence $p=13$. 
Thus, by setting $\mathcal{L}:=|\mathcal{O}_{W_{P}^{13}}(S)|$, we have that $(W_{P}^{13},\mathcal{L})$ is an Enriques-Fano threefold of genus 13. 
\end{proof}

The linear system $\mathcal{Q}$, introduced in the proof of Proposition~\ref{prop: definition of pro W13}, defines a morphism $\varphi : \mathbb{P}^{7} \to \mathbb{P}^{19}$ given by 
$\left[x_0:x_1:x_2:x_3:x_4:x_5:x_6:y\right] \mapsto \left[Z_0:\dots :Z_{19} \right]$
where
$Z_0 = x_6^2$,
$Z_1 = x_0^2+x_2^2$, 
$Z_2 = x_1^2+x_4^2$, 
$Z_3 = x_3^2+x_5^2$,
$Z_4 = (x_0+x_2)x_6$,
$Z_5 = (x_1+x_4)x_6$,
$Z_6 = (x_3+x_5)x_6$,
$Z_7 = x_0x_1+x_2x_4$,
$Z_8 = x_2x_3+x_0x_5$, 
$Z_9 = x_1x_3+x_4x_5$,
$Z_{10} = (x_0-x_2)y$,
$Z_{11} = (x_1-x_4)y$,
$Z_{12} = (x_3-x_5)y$,
$Z_{13} = y^2$,
$Z_{14} = 2x_0x_2$,
$Z_{15} = 2x_1x_4$,
$Z_{16} = 2x_3x_5$,
$Z_{17} = x_4x_3+x_1x_5$,
$Z_{18} = x_0x_3+x_2x_5$,
$Z_{19} = x_1x_2+x_0x_4$.
Hence we have $\pi = \varphi |_{V} : V \to W_{P}^{13} \subset \mathbb{P}^{19}$. Furthermore, the threefold $W_{P}^{13}$ is contained in a $13$-dimensional projective subspace of $\mathbb{P}^{19}$ given by 
$$H_{13}:= \{Z_{14} = 2Z_0, \,\,
Z_{15} = 2Z_0, \,\,
Z_{16} = 2Z_0, \,\,
Z_{17} = Z_4, \,\,
Z_{18} = Z_5, \,\,
Z_{19} = Z_6\}.$$
%
%
%
(see Remark~\ref{rem: ideal Vp13}). Thus, we obtain $\pi : V \to W_{P}^{13} \subset H_{13} \cong \mathbb{P}^{13}$.

\begin{theorem}\label{thm:SIDpro13}
Let $(W_P^{13},\mathcal{L})$ be the P-EF 3-fold of genus $13$. Let $S\in \mathcal{L}$ be a general hyperplane section of $W_P^{13}$ and let $H$ be a general curve section of $W_P^{13}$ on $S$. Then $\phi(H)=4$ and $H$ is numerically equivalent to $2E_1+2E_2+2E_3$, where $E_1$, $E_2$, $E_3$ are primitive effective isotropic divisors on $S$ such that $E_i \cdot E_j=1$, for $1\le i<j < 3$. 
\end{theorem}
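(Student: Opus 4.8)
=== PROOF PROPOSAL ===

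The plan is to exploit the explicit quotient construction $\pi : V \to W_P^{13} = V/\tau$, transporting the computation of $\phi(H)$ and the numerical class of $H$ back to the K3 double cover $\widetilde{S} \in \mathcal{Q}_V$. Recall that $S = \widetilde{S}/\tau$ is the Enriques surface and that $\widetilde{S} = \pi^* S$, so the curve section $H = \mathcal{L}|_S$ pulls back to $\pi^* H = \mathcal{O}_{\widetilde{S}}(1)|_{\widetilde{S}} = M|_{\widetilde{S}}$, the restriction of the hyperplane class of $V \subset \mathbb{P}^7$. The first step is to compute $H^2$: since $H^2 = 2p-2 = 24$ by Proposition~\ref{prop: definition of pro W13}, the inequality $\phi^2 \le H^2$ gives $\phi \le 4$, so by Table~\ref{tab:Ep-phi} the only candidate SID with $p=13$ and $\phi = 4$ whose class is $2E_1+2E_2+2E_3$ is $\mathcal{E}_{13,4}^{(II)+}$, and I must rule out the other components.

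The core of the argument is to produce three primitive effective isotropic classes $E_1, E_2, E_3$ on $S$ with $E_i \cdot E_j = 1$ and $H \equiv 2E_1+2E_2+2E_3$ numerically. I would find these geometrically from the three elliptic pencils on $S_6 \cong \operatorname{Bl}_{a_1,a_2,a_3}\mathbb{P}^2$: the classes $\ell - e_i$ (pencils of lines through $a_i$) together with the anticanonical structure give three pencils of conics / rulings on the Del Pezzo surface, and the cone $V$ over $S_6$ carries the corresponding families of planes. Restricting these to the K3 surface $\widetilde{S}$ yields elliptic curves whose classes are $\tau$-invariant (or $\tau$-antiinvariant in a controlled way), and descending through $\pi$ gives the isotropic divisors $\widetilde E_i$ on $S$. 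The verification $\widetilde E_i \cdot \widetilde E_j = 1$ and $M|_{\widetilde S} \equiv 2(\widetilde E_1 + \widetilde E_2 + \widetilde E_3)$ is then a Chow-ring computation on $V = C(S_6)$: since $-K_{S_6} = 3\ell - e_1 - e_2 - e_3$ and the hyperplane class $M$ restricts to $-K_{S_6}$ on the base, one checks $3\ell - e_1 - e_2 - e_3 = (\ell - e_1) + (\ell - e_2) + (\ell - e_3) + (\text{correction})$ and matches intersection numbers on the K3 double cover, where all self-intersections double relative to $S$.

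Having the numerical class, I must then confirm $\phi(H) = 4$ exactly rather than merely $\phi(H) \le 4$. Each $E_i$ is isotropic with $E_i \cdot H = (\ell - e_i)\cdot(-K_{S_6})\cdot(\text{factor}) = 4$ after tracing through the factor-of-two scaling, so no $E_i$ computes a $\phi$ smaller than $4$; to finish I would argue that no effective isotropic class on $S$ meets $H$ in fewer than $4$ points, equivalently that the map $\varphi_{\mathcal{L}}$ restricted to $S$ is an isomorphism (case (c) of the preliminaries, since $\phi \ge 3$), which is consistent with $W_P^{13} \subset \mathbb{P}^{13}$ being embedded by $\mathcal{L}$. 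The cleanest route is to observe that since $H + K_S$ descends from the $2$-divisible class $M|_{\widetilde S}$ and $H$ is numerically $2(E_1+E_2+E_3)$, the parity datum $\epsilon$ and the structure of the SID force membership in $\mathcal{E}_{13,4}^{(II)+}$; comparison of $\phi$-values across Table~\ref{tab:Ep-phi} then pins down $\phi = 4$.

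The main obstacle I anticipate is the bookkeeping of the involution $\tau$ on $\operatorname{Num}(\widetilde S)$: one must check that the three isotropic classes descend to genuinely distinct primitive classes on $S$ with the correct pairing $E_i \cdot E_j = 1$, and that the descent does not introduce a $K_S$-twist that would instead place $H$ in the component $\mathcal{E}_{13,4}^{(II)-}$. Resolving this requires understanding how $\tau$ permutes the three exceptional curves $e_1, e_2, e_3$ (the explicit formula for $t'$ shows $\tau$ acts on the Del Pezzo as the Bertini-type involution $q_{a_1,a_2,a_3}$, fixing the three pencils as a set but swapping $\ell - e_i$ with the residual conic pencils), so that the $\tau$-invariant sublattice is spanned precisely by three isotropic classes. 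Controlling this action, and hence the precise numerical (as opposed to linear) equivalence asserted in the statement, is where the real work lies.
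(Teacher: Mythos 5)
Your strategy overlaps with the paper's more than you might expect: the paper also passes to the K3 cover $\widetilde{S}=\pi^*S$, uses the projection from the vertex of the cone as a double cover $f:\widetilde{S}\to S_6$, and splits $M|_{\widetilde{S}}\sim f^*(-K_{S_6})\sim \overline{E}_1+\overline{E}_2+\overline{E}_3$ with $\overline{E}_i:=f^*(\ell-e_i)$ and $\overline{E}_i\cdot\overline{E}_j=2$, so that $\pi|_{\widetilde{S}}^*H\sim 2M|_{\widetilde{S}}$ is $2$-divisible on $\widetilde{S}$. Where you diverge is the endgame: the paper never descends the individual classes $\overline{E}_i$; it deduces only that $H$ or $H+K_S$ is $2$-divisible in $\operatorname{Pic}(S)$, gets $3\le\phi\le 4$ from the fact that $S\subset\mathbb{P}^{12}$ is $1$-extendable to $W_P^{13}\subset\mathbb{P}^{13}$ (via \cite[Theorem 4.6.1]{CoDo89}), and then reads off from Table~\ref{tab:Ep-phi} that the only compatible components are $\mathcal{E}_{13,4}^{(II)\pm}$, whence $\phi=4$ and the numerical class. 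Your plan --- descending each $\overline{E}_i$ to an isotropic class $E_i$ on $S$ with $E_i\cdot E_j=1$ --- is a viable and in fact more self-contained alternative (it would bypass both the extendability input and the table), but it is precisely the step you leave undone, and your description of it is wrong in the details.

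Concretely: the descent works because $f\circ\tau|_{\widetilde{S}}=t'\circ f$ and the Cremona involution $t'$ fixes each class $\ell-e_i$ (from $\ell\mapsto 2\ell-e_1-e_2-e_3$ and $e_1\mapsto \ell-e_2-e_3$ one gets $\ell-e_1\mapsto\ell-e_1$); it does not ``swap $\ell - e_i$ with the residual conic pencils'' as you assert. Likewise the $\tau$-invariant sublattice is not ``spanned precisely by three isotropic classes'': it is $\pi^*\operatorname{Num}(S)\cong U(2)\oplus E_8(-2)$, of rank $10$, and it is exactly this equality (every $\tau$-invariant class in $\operatorname{Pic}(\widetilde{S})$ is a pullback) that produces classes $E_i$ on $S$ with $\pi^*E_i=\overline{E}_i$, hence $E_i\cdot E_j=1$ and $H\equiv 2(E_1+E_2+E_3)$ by injectivity of $\pi^*$ on $\operatorname{Num}(S)$; effectivity and primitivity of the $E_i$ then follow easily. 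Two further steps in your write-up fail as stated. First, case (c) of the preliminaries yields only $\phi\ge 3$, never $\phi\ge 4$; the correct finish is that, once $H\equiv 2(E_1+E_2+E_3)$ is known, any effective primitive isotropic $F$ either satisfies $F\equiv E_i$ for some $i$, giving $F\cdot H=4$, or meets all three $E_i$ positively, giving $F\cdot H\ge 6$, so $\phi(H)=4$ (this is also what Table~\ref{tab:Ep-phi} records for this numerical class). Second, your ``parity datum'' argument claiming to force $(S,H)\in\mathcal{E}_{13,4}^{(II)+}$ cannot work: $\pi^*K_S=0$, so $H$ and $H+K_S$ have the same pullback to $\widetilde{S}$, and no computation upstairs can distinguish $\mathcal{E}_{13,4}^{(II)+}$ from $\mathcal{E}_{13,4}^{(II)-}$ --- that distinction is exactly the open question the paper leaves unresolved. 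This overclaim is harmless for the theorem as stated, since both components have $\phi=4$ and the same numerical class, but it is a genuine error in your proposed proof.
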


\begin{proof}
Since $W_P^{13}$ has genus $p=13$ and $\phi(H)^2\le H^2 = S^3 = 2p-2$, then we have $1\le\phi(H)\le 4$. Furthermore, we saw that $S\subset \mathbb{P}^{12}$ is $1$-extendable to $W_P^{13}\subset \mathbb{P}^{13}$, so $3\le \phi \le 4$ (see \cite[Theorem 4.6.1]{CoDo89}).
We also recall that $W_P^{13}$ is given by the quotient $\pi : V \to V/ \tau = W_P^{13}$ (see Proposition~\ref{prop: definition of pro W13}).
Since $\widetilde{S}=\pi^* S$ is a K3 surface given by a particular quadric section of $V$ and $S_6\cong \operatorname{Bl}_{a_1,a_2,a_3}\mathbb{P}^2$ is a hyperplane section of $V$, we have a double cover $f : \widetilde{S} \to S_6$ with ramification locus $R_f = f^*(-K_{S_6})=f^*(3\ell -e_1-e_2-e_3)$ and branch locus $B_f = -2K_{S_6} = 6\ell -2e_1-2e_2-2e_3$. Furthermore, by setting $\overline{E}_i := f^{*} (\ell - e_i)$ for $1\le i\le 3$, we have 
$$M|_{\widetilde{S}}\sim f^*(-K_{S_6})=f^*(3\ell -e_1-e_2-e_3)\sim
 f^* (\ell -e_1+\ell -e_2+\ell -e_3)=$$
 $$=f^*(\ell -e_1)+f^*(\ell -e_2)+f^*(\ell -e_3) 
 =\overline{E}_1+\overline{E}_2+\overline{E}_3=:D,$$ 
where $D^2 = 12$, $\overline{E}_i\cdot \overline{E}_j =2$ for $1\le i < j\le 3$ and $\overline{E}_i^2 = 0$. So $\overline{E}_i$ is an elliptic curve for $1\le i\le 3$.
Since
$\pi|_{\widetilde{S}}^* H = (\pi^* S)|_{\widetilde{S}}\sim \widetilde{S}|_{\widetilde{S}}
\sim 2M|_{\widetilde{S}}\sim 2D = 2(\overline{E}_1+\overline{E}_2+\overline{E}_3)$, then $\pi|_{\widetilde{S}}^* H$ is $2$-divisible on the K3 surface $\widetilde{S}$ and $H$ is numerically $2$-divisible on the Enriques surface $S$, i.e. $H$ or $H+K_S$ is $2$-divisible on $S$. Then we only have the following possibile SID: $H\sim 2(E_1+E_2+E_3)$ or $H\sim 2(E_1+E_2+E_3)+K_S$ (see Table~\ref{tab:Ep-phi}). Thus, $\phi(H) = 4$.
\end{proof}

\begin{remark}\label{rem:barEi=tildeEipro13}
Let us use the notation of proof of Theorem~\ref{thm:SIDpro13} and let us consider the elliptic curves $\widetilde{E}_i := \pi^*(E_i)$, for $1\le i\le 3$.
In both cases $H\sim 2(E_1+E_2+E_3)$ and $H\sim 2(E_1+E_2+E_3)+K_S$, we have $\widetilde{H}:=\pi|_{\widetilde{S}}^* H = 2(\widetilde{E}_1+\widetilde{E}_2+\widetilde{E}_3)$, where $\widetilde{E}_i\cdot \widetilde{E}_j= 2$ for $1\le i< j\le 3$. Furthermore, we have that $\overline{E}_i = \widetilde{E}_i$, for $1\le i\le 3$. Indeed, since $2(\widetilde{E}_1+\widetilde{E}_2+\widetilde{E}_3) \sim \pi|_{\widetilde{S}}^* H \sim 2(\overline{E}_1+\overline{E}_2+\overline{E}_3)$, we have $\widetilde{E}_1+\widetilde{E}_2+\widetilde{E}_3 \sim \overline{E}_1+\overline{E}_2+\overline{E}_3$ on the K3 surface $\widetilde{S}$. Let us suppose $\overline{E}_1 \ne \widetilde{E}_i$ for $1\le i\le 3$, so $\overline{E}_1 \cdot \widetilde{E}_i \ge 2$. Then
$4 = \overline{E}_1 \cdot (\overline{E}_1+\overline{E}_2+\overline{E}_3) = \overline{E}_1 \cdot (\widetilde{E}_1+\widetilde{E}_2+\widetilde{E}_3) \ge 6 $,
which is a contradiction.
\end{remark}

\section{SID of curve sections of the known Enriques-Fano threefolds}\label{subsec:sid}

At this stage we can describe
the simple isotropic decompositions of the curve sections of the known Enriques-Fano threefolds. 
We are able to do this for all of them, except for the curve sections of the P-EF 3-fold of genus $13$, for which we find two possible SID. So, the aim of this section is to prove the following theorem.

\begin{theorem}\label{thm:SID}
Let $(W,\mathcal{L})$ be an Enriques-Fano threefold in the list (I)-(XVII) of \S~\ref{sec:list}. Let $S\in \mathcal{L}$ be a general hyperplane section of $W$ and let $H$ be the curve section of $W$ on $S$. Then $H$ has the $\phi$ and the SID described in Table~\ref{tab:riassunto}.
\end{theorem}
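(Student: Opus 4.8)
The plan is to run a single uniform reduction for every pair $(W,\mathcal{L})$ and then split into cases by the genus $p$. First I would use $\phi^2\le H^2=2p-2$ to bound $\phi:=\phi(H)$ from above, and then decide which of the three regimes (a)--(c) of \S~\ref{subsec:preliminarSid} the map $\phi_{\mathcal{L}}$ realizes on a general smooth section $S$: a hyperelliptic/superelliptic behaviour forces $\phi=1$, a base-point-free but non-embedding system forces $\phi=2$, and an embedding of $S$ forces $\phi\ge 3$. The cleanest way to certify $\phi\ge 3$ is the one already used in the proof of Theorem~\ref{thm:SIDpro13}: whenever the general smooth $S\subset\mathbb{P}^{p-1}$ is $1$-extendable to $W\subset\mathbb{P}^{p}$ (which holds as soon as $W$ is not a cone and $\mathcal{L}$ embeds $S$, in particular for the Bayle--Sano threefolds with very ample $\mathcal{L}$ and for the Prokhorov threefolds), \cite[Theorem 4.6.1]{CoDo89} gives $\phi\ge 3$. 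Combined with the upper bound, this leaves for each genus only the finitely many rows of Table~\ref{tab:Ep-phi} compatible with the admissible $\phi$.

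For most of the threefolds this reduction already isolates a single row. For $W_{BS}^2$ the bound alone gives $\phi=1$ and the unique component $\mathcal{E}_{2,1}$, so $H\sim E_1+E_2$. For the very ample threefolds of genus $6,7,8$ one has $\phi^2\le 2p-2<16$ together with $\phi\ge 3$, hence $\phi=3$, and there is a unique component at $(p,3)$ in each case ($\mathcal{E}_{6,3}$, $\mathcal{E}_{7,3}$, $\mathcal{E}_{8,3}$); that these are absent from the extendability list recalled after Table~\ref{tab:Ep-phi} is consistent with their general sections being nodal. The low genus threefolds of genus $3,4,5$ all satisfy $\phi\le 2$, so $S$ is never embedded, and I would read off from the explicit construction whether $\mathcal{L}$ is base-point-free or not: for instance the quadruple cover presenting $W_{BS}^3$ exhibits $\mathcal{L}$ as base-point-free, whence $\phi=2$ and $H\sim E_1+E_{1,2}$, while a genuinely hyperelliptic presentation would give $\phi=1$.

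The delicate genera are $p=9$ and $p=13$, where the reduction leaves two rows even after imposing $\phi\ge 3$ (namely $\mathcal{E}^{(II)}_{9,3}$ versus $\mathcal{E}^{+}_{9,4}$, and $\mathcal{E}^{(II)}_{13,3}$ versus $\mathcal{E}^{(II)+}_{13,4}$), together with $p=10,17$, where several $\phi=3,4,5$ rows survive. Here I would invoke divisibility of $H$ in $\operatorname{Pic}(S)$, the extra input supplied by \S\S~\ref{subsec:Fano13}--\ref{subsec:pro13}. Since the SID $2E_1+2E_2+E_3$ of $\mathcal{E}^{(II)}_{9,3}$ and $4E_1+3E_2$ of $\mathcal{E}^{(II)}_{13,3}$ are not $2$-divisible, the $2$-divisibility proved in Theorems~\ref{thm:divisibilityK} and~\ref{thm:divisibilitySigma} forces $\mathcal{E}^{+}_{9,4}$ for $W_F^{9}$ and $\mathcal{E}^{(II)+}_{13,4}$ for $W_F^{13}$ (the sign $+$, i.e. $\epsilon=0$, because it is $H$ and not merely $H+K_S$ that is divisible). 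By Remark~\ref{rem:linkBayleFano} these determine the corresponding Bayle--Sano entries: $W_{BS}^{13}$, being rigid, coincides with its limit $W_F^{13}$, while $W_{BS}^{9}$ shares the component $\mathcal{E}^{+}_{9,4}$ of its limit $W_F^{9}$, since the general member's $\phi$ is at least that of its specialization $W_F^{9}$, forcing $\phi=4$ at $p=9$, and connectedness of the family selects the sign $+$. The remaining genus-$9$ entry $W_{KLM}^9$ is handled through its description as a projection of $W_F^{13}$, which transports the class $2(E_1+E_2+E_3)$ to $2E_1+2E_2+E_3$ and lands it in $\mathcal{E}^{(II)}_{9,3}$; for $W_P^{13}$ the same strategy applies, but Theorem~\ref{thm:SIDpro13} yields only \emph{numerical} $2$-divisibility, so both $2(E_1+E_2+E_3)$ and $2(E_1+E_2+E_3)+K_S$ survive and this entry legitimately carries two possible SID. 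For $W_{BS}^{10}$ and $W_P^{17}$ I would instead use the extendability classification recalled after Table~\ref{tab:Ep-phi}: once $\phi\ge 3$ and unnodality of the general section are available, the only compatible components are $\mathcal{E}^{(II)}_{10,3}$ and $\mathcal{E}^{(IV)+}_{17,4}$, giving $H\sim 3(E_1+E_2)$ and $H\sim 4(E_1+E_2)$.

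I expect the main obstacle to lie precisely in the second step of the reduction, namely certifying which regime (a)--(c) holds for each individual construction. Deciding base-point-freeness versus the presence of base points, and establishing the $1$-extendability (hence $\phi\ge 3$) or, where needed, the unnodality of the general section, is where the genuine geometric work resides; this is also why the $2$-divisibility of $H$ is computed by hand in \S\S~\ref{subsec:Fano13}--\ref{subsec:pro13}, as it resolves the two-component ambiguities at $p=9,13$ directly through the class of $H$ in $\operatorname{Pic}(S)$ and bypasses any delicate unnodality verification. The only irreducibly ambiguous output is that of $W_P^{13}$, where just numerical divisibility is available and two SID must be reported.
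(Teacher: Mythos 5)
Your overall skeleton (bound $\phi$ by $\phi^2\le 2p-2$, sort each threefold into the regimes (a)--(c) of \S~\ref{subsec:preliminarSid}, then use $2$-divisibility and limit arguments for the hard genera) does match the paper's strategy, but the execution has one outright error and several missing cases, so it does not prove the theorem as stated.

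The error is $W_{BS}^{10}$. You place it in $\mathcal{E}^{(II)}_{10,3}$ with $H\sim 3(E_1+E_2)$ by invoking the $1$-extendability list together with an assumed unnodality of the general section; the correct component, as in Table~\ref{tab:riassunto}, is $\mathcal{E}^{(I)}_{10,3}$ with $H\sim 2E_1+E_2+E_3+E_4$. The unnodality hypothesis is not merely unverified here: it must \emph{fail}, precisely because the section is $1$-extendable to $W_{BS}^{10}$ while its component $\mathcal{E}^{(I)}_{10,3}$ is absent from the extendability list -- the same tension you yourself observed for the genus $6,7,8$ cases and then set aside. The paper instead works on the K3 cover $\widetilde{S}\subset \mathbb{P}^1\times S_6$: it excludes $3(E_1+E_2)$ because $\pi|_{\widetilde{S}}^*H\cdot\overline{E}_2=8$ is not divisible by $3$, and excludes $2E_{1,2}+E_1+E_2$ because that SID would force every elliptic curve on $\widetilde{S}$ not linearly equivalent to $\widetilde{E}_{1,2},\widetilde{E}_1,\widetilde{E}_2$ to meet $\pi|_{\widetilde{S}}^*H$ in degree at least $8$, whereas $\overline{E}_1\cdot \pi|_{\widetilde{S}}^*H=6$. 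The same unjustified unnodality assumption underlies your treatment of $W_P^{17}$ (where the conclusion happens to be right; the paper instead cites \cite[Proposition 4.7]{CDGK20}).

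Separately, your claim that the $\phi$-reduction ``already isolates a single row'' for most threefolds fails exactly where the paper has to work hardest. At $(p,\phi)=(5,2)$ there are \emph{three} components ($\mathcal{E}^{(I)}_{5,2}$, $\mathcal{E}^{(II)+}_{5,2}$, $\mathcal{E}^{(II)-}_{5,2}$), so for $W_{BS}^5$ one must exclude $2(E_1+E_2)$ via superellipticity (\cite[Theorem 4.7.1]{CoDo89}) and then rule out $2(E_1+E_2)+K_S$ by showing the exceptional quartic elliptic curve $E|_{\widetilde{S}}$ on the K3 cover is not $2$-divisible; for $\overline{W}_{BS}^5$ one needs superellipticity to force $2$-divisibility and hence $2(E_1+E_2)$. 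Likewise at $(7,2)$ the threefold $\overline{W}_{BS}^7$ (which is not among your ``very ample genus $7$'' cases) leaves two components, $3E_1+E_{1,2}$ and $3E_1+2E_2$, and the paper separates them by proving the general section contains no plane cubic elliptic curves, again via intersection numbers on the K3 cover $\widetilde{S}\subset\mathbb{P}^1\times S_4$. None of these arguments, nor any substitute for them, appears in your proposal, so the cases (\ref{bayle5birational}), (\ref{bayle5double}), (\ref{bayle7}) and (\ref{bayle10}) of the theorem remain unproved (and (\ref{bayle10}) is asserted incorrectly).
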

\begin{table}[!ht]
\centering
\begin{tabular}{|c|c|c|c|c|} 
\hline
Marking & EF 3-fold &  SID of $H$ & $\phi(H)$ & $(S,H)\in$\\
\hline &&&&\\[-1em]
(\ref{bayle2}) &  $W_{BS}^2$ & $E_1+E_2$ & $1$ & $\mathcal{E}_{2,1}$\\
\hline &&&&\\[-1em]
(\ref{bayle3quadruple}) &  $W_{BS}^3$ & $E_1+E_{1,2}$ & $2$ & $\mathcal{E}_{3,2}$\\
\hline &&&&\\[-1em]
(\ref{bayle3double}) &  $\overline{W}_{BS}^3$ & $2E_1+E_{2}$ & $1$ & $\mathcal{E}_{3,1}$\\
\hline &&&&\\[-1em]
(\ref{bayle4fano}) &  $W_{BS}^4$, $W_F^4$ & $E_1+E_2+E_3$ & $2$ & $\mathcal{E}_{4,2}$\\
\hline &&&&\\[-1em]
(\ref{bayle4}) &  $\overline{W}_{BS}^4$ & $3E_1+E_2$ & $1$ & $\mathcal{E}_{4,1}$\\
\hline &&&&\\[-1em]
(\ref{bayle5birational}) &  $W_{BS}^5$ & $2E_1+E_{1,2}$ & $2$ & $\mathcal{E}_{5,2}^{(I)}$\\
\hline &&&&\\[-1em]
(\ref{bayle5double}) &  $\overline{W}_{BS}^5$ & $2(E_1+E_2)$ & $2$ & $\mathcal{E}_{5,2}^{(II)-}$\\
\hline &&&&\\[-1em]
(\ref{bayle6fano}) &  $W_{BS}^6$, $W_F^6$ & $E_1+E_2+E_{1,2}$ & $3$ & $\mathcal{E}_{6,3}$\\
\hline &&&&\\[-1em]
(\ref{bayle7}) &  $\overline{W}_{BS}^7$ & $3E_1+E_{1,2}$ & $2$ & $\mathcal{E}_{7,2}^{(I)}$\\
 \hline &&&&\\[-1em]
(\ref{bayle7fano}) &  $W_{BS}^7$, $W_F^7$ & $E_1+E_2+E_3+E_4$ & $3$ & $\mathcal{E}_{7,3}$\\
\hline &&&&\\[-1em]
(\ref{bayle8}) &  $W_{BS}^8$ & $2E_1+E_3+E_{1,2}$ & $3$ & $\mathcal{E}_{8,3}$\\
\hline &&&&\\[-1em]
(\ref{bayle9fano}) &  $W_{BS}^{9}$, $W_F^9$ & $2(E_1+E_{1,2})$ & $4$ & $\mathcal{E}_{9,4}^{+}$\\
\hline &&&&\\[-1em]
(\ref{bayle10}) &  $W_{BS}^{10}$ & $2E_1+E_2+E_3+E_4$ & $3$ & $\mathcal{E}_{10,3}^{(I)}$\\
\hline &&&&\\[-1em]
(\ref{bayle13fano}) &  $W_{BS}^{13}$, $W_F^{13}$ & $2(E_1+E_2+E_3)$ & $4$ & $\mathcal{E}_{13,4}^{(II)+}$\\
\hline &&&&\\[-1em]
(\ref{KLM}) &  $W_{KLM}^9$ & $2E_1+2E_2+E_3$ & $3$ & $\mathcal{E}_{9,3}^{(II)}$\\
\hline &&&&\\[-1em]
(\ref{pro13}) &  $W_P^{13}$ & $2(E_1+E_2+E_3)$ & $4$ & $\mathcal{E}_{13,4}^{(II)+}$\\
 &   & or $2(E_1+E_2+E_3)+K_S$ & & or $\mathcal{E}_{13,4}^{(II)-}$\\
\hline &&&&\\[-1em]
(\ref{pro17}) &  $W_P^{17}$ & $4(E_1+E_2)$ & $4$ & $\mathcal{E}_{17,4}^{(IV)+}$\\
\hline
\end{tabular} 
\caption{Simple isotropic decomposition of the curve section class $H$ of an Enriques-Fano threefold $(W,\mathcal{L})$ on a general $S\in \mathcal{L}$.} 
\label{tab:riassunto}   
\end{table}
\begin{proof}
Let us study the known Enriques-Fano threefolds case by case. If $(W,\mathcal{L})$ is a fixed Enriques-Fano threefold of genus $p$, we will denote each time by $\phi_{\mathcal{L}} : W \dashrightarrow \mathbb{P}^{p}$ the rational map defined by $\mathcal{L}$, by $S$ a general element of $\mathcal{L}$, by $H$ a general curve section of $W$ on $S$ satisfying $H^2 = 2p-2$, and by $\phi$ the value $\phi(H)$.
\begin{itemize}
\item[(\ref{bayle2})] $W=W_{BS}^2$. 
The map $\phi_{\mathcal{L}}: W \dashrightarrow \mathbb{P}^2$ is a rational map (see \cite[\S 6.1.6]{Ba94}).
Since $p=2$ and $\phi^2\le 2p-2$, then we have $\phi=1$. So the SID is $H \sim E_1+E_2$ (see Table~\ref{tab:Ep-phi}).

%

\item[(\ref{bayle3quadruple})] $W=W_{BS}^3$.
Since $p=3$ and $\phi^2 \le 2p-2$, then we have $1\le \phi \le 2$.
The map $\phi_{\mathcal{L}}: W \to \mathbb{P}^3$ is a morphism and a quadruple cover (see \cite[\S 6.1.5]{Ba94}). 
This implies $\phi = 2$, because if $\phi = 1$ the map would be a double cover (see \S~\ref{subsec:preliminarSid} (a)). Then the SID is $H \sim E_1+E_{1,2}$ (see Table~\ref{tab:Ep-phi}).

%
%
\item[(\ref{bayle3double})] $W=\overline{W}_{BS}^{3}$.
Since $p=3$ and $\phi^2 \le 2p-2$, then we have $1\le \phi \le 2$.
The map $\phi_{\mathcal{L}}: W \dashrightarrow \mathbb{P}^3$ is a rational map and a double cover (see \cite[\S 6.2.7]{Ba94}). 
We have that $\phi_\mathcal{L} |_{S}$ is hyperelliptic, since it is of degree $2$ onto a plane. This implies $\phi = 1$ (see \S~\ref{subsec:preliminarSid} (a)) and the SID must be $H \sim 2E_1+E_2$ (see Table~\ref{tab:Ep-phi}).

%
%
\item[(\ref{bayle4fano})] $W=W_{BS}^4$. 
Since $p=4$ and $\phi^2 \le 2p-2$, then we have $1\le \phi\le 2$.
The map $\phi_{\mathcal{L}}: W \dashrightarrow \mathbb{P}^4$ is a rational map birational onto the image (see \cite[\S 6.3.3]{Ba94}), which is the Enriques threefold $W_F^4$ of \cite[\S 10]{Fa38}.
Since $S$
is mapped by $\phi_{\mathcal{L}}$ to a general sextic surface of $\mathbb{P}^3$ double along the edges of a tetrahedron, then the SID is $H \sim E_1+E_2+E_3$ (see \cite[\S 5]{CDGK19}) and $\phi = 2$ (see Table~\ref{tab:Ep-phi}).

%

\item[(\ref{bayle4})] $W=\overline{W}_{BS}^4$.
Since $p=4$ and $\phi^2 \le 2p-2$, then we have $1\le \phi\le 2$.
The map $\phi_{\mathcal{L}}: W \dashrightarrow \mathbb{P}^4$ is a rational map and a double cover over its image, which is a quadric cone 
(see \cite[\S 6.6.2]{Ba94}). 
We have that $\phi_\mathcal{L} |_{S}$ is hyperelliptic, since it is of degree $2$ onto a quadric surface of $\mathbb{P}^3$.
Then we have $\phi = 1$ (see \S~\ref{subsec:preliminarSid} (a)) and the SID is $H \sim 3E_1+E_2$ (see Table~\ref{tab:Ep-phi}).

%

\item[(\ref{bayle5birational})] $W=W_{BS}^5$.
Since $p=5$ and $\phi^2 \le 2p-2$, then we have $1\le \phi\le 2$.
The map $\phi_{\mathcal{L}}: W \to \mathbb{P}^5$ is a morphism birational onto its image, 
which has two double planes 
(see \cite[\S 6.2.2]{Ba94}). So we have that $\phi =2$ (see \S~\ref{subsec:preliminarSid} (b)). The SID is $H \sim 2(E_1+E_2)+K_S$ or $H \sim 2E_1+E_{1,2}$ or $H\sim 2(E_1+E_2)$ (see Table~\ref{tab:Ep-phi}). The case $H\sim 2(E_1+E_2)$ is excluded, otherwise the map $\phi_{\mathcal{L}}|_S$ would be superelliptic (see \cite[Theorem 4.7.1]{CoDo89}).
Let us consider now a smooth intersection $B_4 := Q_1\cap Q_2$ of two quadric hypersurfaces of $\mathbb{P}^{5}$ and an elliptic curve $e\subset B_4$ given by the intersection of two hyperplane sections of $B_4$. 
In Bayle's description, an Enriques-Fano threefold $W$ of this type is given by the quotient $\pi : X \to X/\sigma=:W$ of $X:= \operatorname{Bl}_{e}B_4$, that is the blow-up of $B_4$ along the curve $e$, where $\sigma$ is an involution of $X$ with eight fixed points. 
Let us denote the above blow-up by the map $bl : X \to B_4$ and let $E:=bl^{-1}(e)$ be the exceptional divisor. If $h$ denotes the hyperplane class of $\mathbb{P}^5$, then $K_{Q_1}=(K_{\mathbb{P}^5}+Q_1)|_{Q_1}=(-4h)|_{Q_1}$ and $K_{B_4}=(K_{Q_1}+B_4)|_{B_4}=(K_{Q_1}+Q_2|_{Q_1})|_{B_4}=(-2h)|_{B_4}$ (see \cite[p.187]{GH}). 
Furthermore, if $\widetilde{S}$ is the K3-surface $\pi^* S$, then $\pi|_{\widetilde{S}}^*H \sim -K_X |_{\widetilde{S}} = \big(2bl^*(h)-E\big)|_{\widetilde{S}}$. 
Let us see that $E|_{\widetilde{S}}$ is not $2$-divisible. We observe that $\tilde{S}$ is isomorphic to the complete intersection of three quadric hypersurfaces of $\mathbb{P}^5$ 
and that $E|_{\widetilde{S}}$ is a quartic elliptic curve $C$. 
If $E|_{\widetilde{S}}$ were $2$-divisible, we would have a divisor $D$ on $\widetilde{S}$ such that $C\sim 2D$ and $D^2=0$. 
We observe that $-D$ couldn't be effective, otherwise $-2D\sim -C$ would be effective and this is a contradiction; so by Serre Duality we would have $h^2(\mathcal{O}_{\widetilde{S}}(D))=0$. Furthermore, by Riemann-Roch we would obtain
$h^0(\mathcal{O}_{\widetilde{S}}(D)) \ge h^0(\mathcal{O}_{\widetilde{S}}(D))-h^1(\mathcal{O}_{\widetilde{S}}(D)) = 2>0.$
Thus, $D$ would be effective, elliptic (by the adjunction formula)
and with degree $2$, which is a contradiction.
This implies that 
$H$ is not numerically divisible by $2$, 
so the only possible SID is $H \sim 2E_1+E_{1,2}$.


\item[(\ref{bayle5double})] $W=\overline{W}_{BS}^5$.
Since $p=5$ and $\phi^2 \le 2p-2$, then we have $1\le \phi\le 2$.
The map $\phi_{\mathcal{L}}: W \to \mathbb{P}^5$ is a morphism and it is a double cover of the image, 
which is the complete intersection of two quadric hypersurfaces (see \cite[\S 6.1.2]{Ba94}). 
We observe that $\phi_\mathcal{L} |_{S}$ is superelliptic, because it is of degree $2$ onto a quartic surface of $\mathbb{P}^4$.
Hence we have $\phi = 2$, because if $\phi=1$ the map would be hyperelliptic (see \S~\ref{subsec:preliminarSid} (a)). Then the SID is $H\sim 2(E_1+E_2)$ (see Table~\ref{tab:Ep-phi}), since $H$ has to be $2$-divisible in $\operatorname{Pic}(S)$ (see \cite[Theorem 4.7.1]{CoDo89}).

%

\item[(\ref{bayle6fano})] $W=W_{BS}^6$.
Since $p=6$ and $\phi^2 \le 2p-2$, then we have $1\le \phi\le 3$.
The map $\phi_{\mathcal{L}}: W \hookrightarrow \mathbb{P}^6$ is a morphism and it is an isomorphism onto its image (see \cite[\S 6.2.4]{Ba94}). Therefore we have $\phi =3$, otherwise $\phi_{\mathcal{L}}|_{S}$ would not be an isomorphism onto its image (see \S~\ref{subsec:preliminarSid} (c)). Then the SID is $H \sim E_1+E_2+E_{1,2}$ (see Table~\ref{tab:Ep-phi}). We also recall that the F-EF threefold $W_F^6$ of \cite[\S 3]{Fa38} is a limit of $W_{BS}^6$ (see \cite[Main Theorem 2]{Mi99}).



\item[(\ref{bayle7})] $W=\overline{W}_{BS}^7$.
Since $p=7$ and $\phi^2 \le 2p-2$, then we have $1\le \phi\le 3$.
The map $\phi_{\mathcal{L}}: W \to \mathbb{P}^7$ is a morphism, it is birational onto its image but it is not an isomorphism onto its image, since there are points in the image with two preimages (see \cite[\S 6.6.1]{Ba94}). 
Let us explain it better. In Bayle's description, an Enriques-Fano threefold $W$ of this type is given by the quotient $\pi : X \to X/\sigma=:W$ of $X:=\mathbb{P}^1\times S_4$, where $S_{4}\subset \mathbb{P}^{4}$ is a Del Pezzo surface of degree $4$ and $\sigma$ is an involution of $X$ with eight fixed points. In his analysis, Bayle introduces a morphism $\varphi : X \to \mathbb{P}^7$ such that we have the following commutative diagram
\begin{center}
$\begin{tikzcd}
X=\mathbb{P}^1\times S_4 \arrow[d, "\pi"] \arrow[dr, "\varphi"] & \\
W \arrow[r, "\phi_{\mathcal{L}}"]  & \varphi (X) = \phi_{\mathcal{L}} (W) \subset \mathbb{P}^{7}.
\end{tikzcd}$ 
\end{center}
In particular a point $x\in \varphi (X)$ has two preimages in $X$, except in the case in which $x\in \varphi (\left[0:1\right]\times S_4 )\cup \varphi (\left[1:0\right]\times S_4)$: in this case $\varphi^{-1}(x)$ is given by four points of $X$. Since $\pi : X \to W$ has degree $2$, then 
$\phi_{\mathcal{L}}^{-1}(x)$ is given by one point if $x\in \phi_{\mathcal{L}} (W)\setminus \left( \varphi (\left[0:1\right]\times S_4 )\cup \varphi (\left[1:0\right]\times S_4) \right)$, otherwise it is given by two points.
This implies $\phi = 2$ (see \S~\ref{subsec:preliminarSid} (b)). For the SID of $H$ we have a priori two possibilities, namely $H \sim 3E_1+E_{1,2}$ and $H \sim 3E_1+2E_2$ (see Table~\ref{tab:Ep-phi}).

\begin{remark}\label{rem:bayle7cubiccurves}
In the case in which the SID is $H \sim 3E_1+E_{1,2}$, the surface $S$ does not contain elliptic cubic curves. Indeed, we have $\deg E_1 = E_1\cdot H = 2$ and $\deg E_{1,2}= E_{1,2}\cdot H = 6$. Furthermore, let $E$ be an elliptic curve in $S$ such that it is not numerically equivalent to $E_1$, $E_{1,2}$, $2E_1$, $2E_{1,2}$. By \cite[Lemma 2.1]{KL07} we have that $E\cdot E_1 >0$, $E\cdot E_{1,2}>0$ and so $\deg E = E\cdot H \ge 3+1 = 4$. 
\end{remark}

\begin{remark}\label{rem:bayle7cubiccurves2}
In the case in which the SID is $H \sim 3E_1+2E_2$, the surface $S$ contains the following elliptic cubic curves: $E_2$ and $E_2'\sim E_2+K_{S}$.
\end{remark}

It is known that the surface $S_4$ is the image of $\mathbb{P}^2$ via the rational map $\lambda$
defined by the linear system of the plane cubic curves passing through five fixed points $a_1$, $a_2$, $a_3$, $a_4$, $a_5$ in general position. In particular $S_4\cong \operatorname{Bl}_{a_1,a_2,a_3,a_4,a_5}\mathbb{P}^2$, where $bl : \operatorname{Bl}_{a_1,a_2,a_3,a_4,a_5}\mathbb{P}^2 \to \mathbb{P}^2$ is the blow-up of the plane at these five points. 
Let $\ell$ be the strict transform of a general line of $\mathbb{P}^{2}$ and let us consider the exceptional divisors $e_i := bl^{-1}(a_i)$ of $bl: S_4 \to \mathbb{P}^2$, for $1\le i \le 5$. 
Let us take 
the K3-surface $\widetilde{S} :=\pi^* S$. Then we have that
\begin{center}
$\pi|_{\widetilde{S}}^*H \sim -K_X |_{\widetilde{S}} \sim (2p\times S_4 + \mathbb{P}^1\times (-K_{S_4}))|_{\widetilde{S}} \sim$

$\sim \left(2p\times S_4 + \mathbb{P}^1 \times (3\ell -e_1-e_2-e_3-e_4-e_5)\right)|_{\widetilde{S}}=$

$=2p\times S_4 |_{\widetilde{S}} + \mathbb{P}^1\times (\ell -e_5)|_{\widetilde{S}}+\mathbb{P}^1\times (2\ell -e_1-e_2-e_3-e_4)|_{\widetilde{S}}.$
\end{center}
By setting $\overline{E}_1:=\mathbb{P}^1\times (\ell -e_5)|_{\widetilde{S}}$, $\overline{E}_2:=\mathbb{P}^1\times (2\ell -e_1-e_2-e_3-e_4)|_{\widetilde{S}}$ and $\overline{E}_3:=p\times S_4 |_{\widetilde{S}}$, we have 
$\pi|_{\widetilde{S}}^*H\sim \overline{E}_1+\overline{E}_2+2\overline{E}_3$, 
where $\overline{E}_1^2 = \overline{E}_2^2 =\overline{E}_3^2 = 0$, $\overline{E}_1\cdot \overline{E}_2 =4$ and $\overline{E}_1\cdot \overline{E}_3 =\overline{E}_2\cdot \overline{E}_3 =2$.
Furthermore, by the adjunction formula, we have that $K_{\overline{E}_i}=0$ and $p_g(\overline{E}_i)=1$, for $1\le i\le 3$. Let us suppose that there exists an elliptic cubic curve $E$ on $S$ and let us define $\overline{E}:=\pi^{-1}(E)$. Since $E\cdot H = 3$ on $S$, then $\overline{E}\cdot \pi|_{\widetilde{S}}^*H = 6$ on $\widetilde{S}$. Obviously, we have that $\overline{E}$ is not linearly equivalent to $\overline{E}_1$, $\overline{E}_2$, $\overline{E}_3$, because $\overline{E}_1 \cdot \pi|_{\widetilde{S}}^*H = \overline{E}_2 \cdot \pi|_{\widetilde{S}}^*H = 8\ne 6$ and $\overline{E}_3 \cdot \pi|_{\widetilde{S}}^*H = 4\ne 6$.  
Since two elliptic curves on a K3 surface, which are not linearly equivalent, intersect at least in two points, then $\overline{E} \cdot \pi|_{\widetilde{S}}^*H \ge 2+2+2\cdot 2 = 8>6$, which is a contradiction. Hence $S$ does not contain elliptic cubic curves and so, by Remarks~\ref{rem:bayle7cubiccurves}, ~\ref{rem:bayle7cubiccurves2}, the SID is $H \sim 3E_1+E_{1,2}$ with $\phi = 2.$

%

\item[(\ref{bayle7fano})] $W=W_{BS}^7$.
Since $p=7$ and $\phi^2 \le 2p-2$, then we have $1\le \phi\le 3$.
The map $\phi_{\mathcal{L}}: W \hookrightarrow \mathbb{P}^7$ is a morphism and it is an isomorphism onto its image (see \cite[\S 6.4.1]{Ba94}). This implies $\phi = 3$ (see \S~\ref{subsec:preliminarSid} (c)), which yields the SID $H \sim E_1+E_2+E_3+E_4$ (see Table~\ref{tab:Ep-phi}). See also \cite[Lemma 4.6]{CDGK20}, where these threefolds are obtained via a \textit{projection} technique from (\ref{bayle13fano}). The F-EF threefold $W_F^7$ of \cite[\S 4]{Fa38} is a limit of $W_{BS}^7$ (see \cite[Main Theorem 2]{Mi99}).


\item[(\ref{bayle8})] $W=W_{BS}^8$.
Since $p=8$ and $\phi^2 \le 2p-2$, then we have $1\le \phi \le 3$.
The map $\phi_{\mathcal{L}}: W \hookrightarrow \mathbb{P}^8$ is a morphism and it is an isomorphism onto its image (see \cite[\S 6.4.2]{Ba94}). This implies $\phi = 3$ (see \S~\ref{subsec:preliminarSid} (c)), which yields 
$H \sim 2E_1+E_3+E_{1,2}$ (see Table~\ref{tab:Ep-phi}).



\item[(\ref{bayle9fano})] $W=W_{BS}^9$. 
Since $p=9$ and $\phi^2 \le 2p-2$, then we have $1\le \phi \le 4$.
The map $\phi_{\mathcal{L}}: W \hookrightarrow \mathbb{P}^9$ is a morphism and it is an isomorphism onto its image (see \cite[\S 6.1.4]{Ba94}). Therefore one has $3\le \phi \le 4$ (see \S~\ref{subsec:preliminarSid} (c)). In Bayle's description, an Enriques-Fano threefold $W$ of this type is given by the quotient $\pi : X \to X/\sigma=:W$ of the complete intersection $X$ of two quadric hypersurfaces of $\mathbb{P}^5$, where $\sigma$ is an involution of $X$ with eight fixed points. This implies that 
$H$ is numerically divisible by $2$: indeed, if $\widetilde{S}$ is the K3-surface $\pi^* S$, then $\pi|_{\widetilde{S}}^*H \sim -K_X |_{\widetilde{S}}$ where $-K_X$ is a quadric section of $X$. So we have $\phi = 4$ and $H\sim 2(E_1+E_{1,2})$ or $H\sim 2(E_1+E_{1,2})+K_S$ (see Table~\ref{tab:Ep-phi}).
We recall that the F-EF threefold $W_F^9$ of \cite[\S 7]{Fa38} is a limit of $W_{BS}^9$ (see \cite[Main Theorem 2]{Mi99}). 
Furthermore, Fano's description shows that $H$ is $2$-divisible in $\operatorname{Pic}(S)$ (see Theorem~\ref{thm:divisibilityK}). 
This implies that the only possible SID is $H \sim 2(E_1+E_{1,2})$.


\item[(\ref{bayle10})] $W=W_{BS}^{10}$. Since $p=10$ and $\phi^2 \le 2p-2$, then we have $1\le \phi \le 4$. The map $\phi_{\mathcal{L}}: X \hookrightarrow \mathbb{P}^{10}$ is a morphism and it is an isomorphism onto its image (see \cite[\S 6.5.1]{Ba94}). Therefore one has $3\le \phi \le 4$ (see \S~\ref{subsec:preliminarSid} (c)). The possible cases of the SID of $H$ are 
$H\sim 2E_1+E_2+E_3+E_4$, $H\sim 3(E_1+E_2)$ and $H\sim 2E_{1,2}+E_1+E_2$ (see Table~\ref{tab:Ep-phi}).
We recall that an Enriques-Fano threefold $W$ of this type is given by the quotient $\pi : X \to X/\sigma=:W$ of $X:=\mathbb{P}^1\times S_6$, where $S_6$ is a smooth Del Pezzo surface of degree $6$ in $\mathbb{P}^{6}$ and $\sigma$ is an involution of $X$ with eight fixed points.
It is known that the surface $S_6$ is the image of $\mathbb{P}^2$ via the rational map 
defined by the linear system of the plane cubic curves passing through three fixed points $a_1$, $a_2$, $a_3$ in general position. In particular $S_6\cong \operatorname{Bl}_{a_1,a_2,a_3}\mathbb{P}^2$, where $bl : \operatorname{Bl}_{a_1,a_2,a_3}\mathbb{P}^2 \to \mathbb{P}^2$ is the blow-up of the plane at these three points. 
Let $\ell$ be the strict transform of a general line of $\mathbb{P}^{2}$ and let us consider the exceptional divisors $e_i = bl^{-1}(a_i)$ of $bl: S_6 \to \mathbb{P}^2$, for $1\le i\le 3$. 
Let us take 
the K3-surface $\widetilde{S} :=\pi^* S$. Then we have that
\begin{center}
$\pi|_{\widetilde{S}}^*H \sim -K_X |_{\widetilde{S}} \sim (2p\times S_6 + \mathbb{P}^1\times (-K_{S_6}))|_{\widetilde{S}}
= \left(2p\times S_6 + \mathbb{P}^1 \times (3\ell -e_1-e_2-e_3)\right)|_{\widetilde{S}}=$

$=2p\times S_6 |_{\widetilde{S}} + \mathbb{P}^1\times (\ell -e_1)|_{\widetilde{S}}+\mathbb{P}^1\times (\ell -e_2)|_{\widetilde{S}}+\mathbb{P}^1\times (\ell -e_3)|_{\widetilde{S}}.$
\end{center}
By setting $\overline{E}_1:=p\times S_6 |_{\widetilde{S}}$ and $\overline{E_i}:=\mathbb{P}^1\times (\ell -e_i)|_{\widetilde{S}}$, for $2\le i\le 4$, we have 
$\pi|_{\widetilde{S}}^*H\sim 2\overline{E}_1+\overline{E}_2+\overline{E}_3+\overline{E}_4$, 
where $\overline{E}_i^2 = 0$ and $\overline{E}_i\cdot \overline{E}_j =2$, for $1\le i<j \le 4$.
Furthermore, by the adjunction formula, we have that $K_{\overline{E}_i}=0$ and $p_g(\overline{E}_i)=1$, for $1\le i\le 4$. We will now prove that the SID is $H\sim 2E_1+E_2+E_3+E_4$. If the SID were $H\sim 3(E_1+E_2)$, then $H$ would be $3$-divisible and therefore also $\pi|_{\widetilde{S}}^*H$. But this does not happen because $\pi|_{\widetilde{S}}^*H\cdot \overline{E}_2 = 8$ is not divisible by $3$. Now suppose $H\sim 2E_{1,2}+E_{1}+E_{2}$. By setting $\widetilde{E}_{1,2}:=\pi|_{\widetilde{S}}^*E_{1,2}$, $\widetilde{E}_{1}:=\pi|_{\widetilde{S}}^*E_{1}$ and $\widetilde{E}_{2}:=\pi|_{\widetilde{S}}^*E_{2}$, we have $\widetilde{E}_{1,2}\cdot \widetilde{E}_{1}=\widetilde{E}_{1,2}\cdot \widetilde{E}_{2}=4$ and $\widetilde{E}_{1}\cdot \widetilde{E}_{2}=2$. Hence $\widetilde{E}_{1,2}\cdot\pi|_{\widetilde{S}}^*H = 8$ and $\widetilde{E}_{1}\cdot\pi|_{\widetilde{S}}^*H = \widetilde{E}_{2}\cdot\pi|_{\widetilde{S}}^*H = 10$. Let $D$ be any elliptic curve on $S$ such that $D^2=0$ and that is not linearly equivalent to $\widetilde{E}_{1,2}$, $\widetilde{E}_1$, $\widetilde{E}_2$: then $D\cdot \pi|_{\widetilde{S}}^*H \ge 2\cdot 2+2+2 = 8$,
since two elliptic curves on a K3 surface, which are not linearly equivalent, intersect at least in two points.
But if we took $D=\overline{E}_1$, we would obtain $\overline{E}_1\cdot \pi|_{\widetilde{S}}^*H = 6<8$, which is a contradiction. Then it must be $H\sim 2E_1+E_2+E_3+E_4$ with $\phi=3$.



\item[(\ref{bayle13fano})] $W=W_{BS}^{13}$.
Since $p=13$ and $\phi^2 \le 2p-2$, then we have $1\le \phi \le 4$.
The map $\phi_{\mathcal{L}}: W \hookrightarrow \mathbb{P}^{13}$ is a morphism and it is an isomorphism onto its image (see \cite[\S 6.3.2]{Ba94}). Therefore one has $3\le \phi \le 4$ (see \S~\ref{subsec:preliminarSid} (c)). According to Bayle, an Enriques-Fano threefold $W$ of this type is given by the quotient $\pi : X \to X/\sigma=:W$ of $X:=\mathbb{P}^{1} \times \mathbb{P}^{1} \times \mathbb{P}^{1}$ under an involution $\sigma$ of $X$ with eight fixed points. So Bayle's description implies that 
$H$ is numerically divisible by $2$: indeed, if $\widetilde{S}$ is the K3-surface $\pi^* S$, then $\pi|_{\widetilde{S}}^*H \sim -K_X |_{\widetilde{S}} \sim (2,2,2)|_{\widetilde{S}}.$ 
We recall that the F-EF threefold $W_F^{13}$ of \cite[\S 8]{Fa38} is a limit of $W_{BS}^{13}$ (see \cite[Main Theorem 2]{Mi99}). 
Furthermore, Fano's description shows that $H$ is $2$-divisible in $\operatorname{Pic}(S)$ (see Theorem~\ref{thm:divisibilitySigma}). 
So the only possible case is $\phi = 4$ and 
$H \sim 2(E_1+E_2+E_3)$ (see Table~\ref{tab:Ep-phi}).



\item[(\ref{KLM})] $W=W_{KLM}^9$.
These threefolds are obtained by \textit{projection} of the threefolds in (\ref{bayle13fano}) from, say, the curve $E_3$ (see \cite{KLM11}). Then $H \sim 2(E_1+E_2)+E_3$ and $\phi = 3$. 

\item[(\ref{pro13})] $W=W_P^{13}$.
By Theorem~\ref{thm:SIDpro13} we have $\phi = 4$ and the following two possibile SID: $H\sim 2(E_1+E_2+E_3)$ or $H\sim 2(E_1+E_2+E_3)+K_S$.

\item[(\ref{pro17})] $W=W_P^{17}$ - see \cite[\S 3.3]{Pro07}. 
By \cite[Proposition 4.7]{CDGK20} the SID is $H \sim 4(E_1+E_2)$ and $\phi=4$.


\end{itemize}
\end{proof}

%

\subsection*{Open question}
It remains to understand which case really occurs concerning the SID of the curve sections of the P-EF 3-fold of genus 13.


\begin{thebibliography}{}
\addcontentsline{toc}{section}{References}

\bibitem{Ba94} L. Bayle, \textit{Classification des vari\'{e}t\'{e}s complexes projectives de dimension trois dont une section hyperplane g\'{e}n\'{e}rale est une surface d'Enriques.} J. Reine Angew. Math., 449, 9-63, (1994).

\bibitem{Ch96} I. A. Cheltsov, \textit{Singularities of 3-Dimensional Varieties Admitting an Ample Effective Divisor of Kodaira Dimension Zero.} Mathematical Notes, Vol. 59, No. 4, (1996).

\bibitem{Ch97} I.Cheltsov, \textit{On the rationality of non-Gorenstein Q-Fano 3-folds with an integer Fano index.} Cont. Math. 207 (1997), 43-50.

\bibitem{Ch04} I. A. Cheltsov, \textit{Rationality of an Enriques-Fano threefold of genus five.} Izv. RAN. Ser. Mat., 68:3 (2004), 181-194; Izv. Math., 68:3 (2004), 607-618.

\bibitem{CDGK19} C. Ciliberto, T. Dedieu, C. Galati, A. L. Knutsen, \textit{Irreducible unirational and uniruled components of moduli spaces of polarized Enriques surfaces}, arXiv:1809.10569 [math.AG], version 04 December 2019.

\bibitem{CDGK20} C. Ciliberto, T. Dedieu, C. Galati, A. L. Knutsen, \textit{Moduli of curves on Enriques surfaces}, Advances in Mathematics 365 (2020).

\bibitem{Co82} A. Conte, \textit{On threefolds whose hyperplane sections are enriques surfaces.} In: Conte A. (eds) Algebraic Threefolds. Lecture Notes in Mathematics, vol 947. Springer, Berlin, Heidelberg, (1982).

\bibitem{Co83} A. Conte, \textit{Two examples of algebraic threefolds whose hyperplane sections are Enriques surfaces.} In: Ciliberto C., Ghione F., Orecchia F. (eds) Algebraic Geometry - Open Problems. Lecture Notes in Mathematics, vol 997. Springer, Berlin, Heidelberg, (1983).

\bibitem{CoMu85} A. Conte, J. P. Murre, \textit{Algebraic varieties of dimension three whose hyperplane sections are Enriques surfaces.} Ann. Scuola Norm. Sup. Pisa Cl. Sci. (4), 12(1):43-80, (1985).

\bibitem{CoDo89} F. R. Cossec, I. V. Dolgachev, \textit{Enriques surfaces. I} Progress in Mathematics, 76, Birkh\"{a}user Boston, Inc., Boston, Ma, (1989).

\bibitem{Dolg12} I.V. Dolgachev, \textit{Classical Algebraic Geometry. A Modern View}, Cambridge University Press, Cambridge, (2012)

\bibitem{Dolg16} I.V. Dolgachev, \textit{A brief introduction to Enriques surfaces. Development of Moduli Theory - Kyoto 2013}, 1-32, Mathematical Society of Japan, Tokyo, Japan, (2016).


\bibitem{Fa38} G. Fano, \textit{Sulle variet\`{a} algebriche a tre dimensioni le cui sezioni iperpiane sono superficie di genere zero e bigenere uno.} Memorie societ\`{a} dei XL, 24, 41-66, (1938).

\bibitem{GH} P. Griffiths, J. Harris, \textit{Principles of algebraic geometry}, Reprint of the 1978 original. Wiley Classic Library. John Wiley \& Sons, Inc. , New York, (1994).


\bibitem{Hart} R. Hartshorne, \textit{Algebraic Geometry}, Springer-Verlag, New York, (1977).


\bibitem{Is77} V.A. Iskovskih, \textit{Fano threefolds. I}, Math. USSR Izv., 11 (3): 485-527, (1977).

\bibitem{Is78} V.A. Iskovskih, \textit{Fano 3-folds II}, Math USSR Izv., 12 (3): 469-506, (1978).

\bibitem{Knu20} A.L. Knutsen, \textit{On moduli spaces of polarized Enriques surfaces}, Journal de Math\'{e}matiques Pures et Appliqu\'{e}es (2020).

\bibitem{KL07} A.L. Knutsen, A.F. Lopez, \textit{A sharp vanishing theorem for line bundles on K3 or Enriques surfaces}, Proceedings of the American Mathematical Society 135, 3495-3498, (2007).

\bibitem{KLM11} A.L. Knutsen, A.F. Lopez, R. Munoz, \textit{On the extendability of projective surfaces and a genus bound for Enriques-Fano threefolds}, J. Diff. Geom. 88, 483-518, (2011).


\bibitem{MoMu} Mori, Mukai, \textit{Classification of Fano threefolds with} $B_{2}\ge 2$, Manuscr. Math. 36, 147-162, (1981).

\bibitem{MoMu03} S. Mori, S. Mukai. \textit{Erratum: ``Classification of Fano 3-folds with $B_{2}\ge 2$'' [Manuscripta Math. 36 (1981/82), no. 2, 147-162]}, Manuscr. Math., 110:407, 2003.

\bibitem{Mi99} T. Minagawa, \textit{Deformations of Q-Calabi-Yau 3-folds and Q-Fano 3-folds of Fano index 1}, J. Math. Sci. Univ. Tokyo, 6(2):397-414, (1999).

\bibitem{P-BV83} L.Picco-Botta, A.Verra \textit{The nonrationality of the Enriques threefold}, Compositio Math., 48, 167-184, (1983).

\bibitem{Pro07} Yu. G. Prokhorov, \textit{On Fano-Enriques varieties}, Mat. Sb. 198, 117-134, (2007).

\bibitem{Re87} M. Reid, \textit{Young person's guide to canonical singularities}, Proc. Symp. Pure Math. 46, 345-414, (1987).

\bibitem{Sa95} T. Sano, \textit{On classification of non-Gorenstein} $\mathbb{Q}$-\textit{Fano 3-folds of Fano index 1}, J. Math. Soc. Japan 47, 369-380, (1995).

\end{thebibliography}
\end{document}